\numberwithin{equation}{section}
\definecolor{lightgreen}{RGB}{153,255,153}
\definecolor{brightred}{RGB}{255,80,102}
\definecolor{lightblue}{RGB}{147,224,255} 
\tikzset{
	pics/mynodeA/.style={
		code={
			\draw[line width=1 pt] (0,0) circle(0.3cm);
			\foreach \i in {1,...,4}
			\fill (\i*90-45:0.2cm) coordinate (n\i) circle(1 pt)
			\ifnum \i>1 
			foreach \j in {\i,...,1}{(n\i) edge (n\j)} 
			\fi;
		}
	},pics/mynodeB/.default=1,
	Tnode/.style={circle,path picture={
			\path let
			\p1 = (path picture bounding box.south west),
			\p2 = (path picture bounding box.north east),
			\n1 = {scalar(veclen(\x2-\x1,\y2-\y1)/1cm/sqrt(2))}
			in (path picture bounding box.center)
			pic{mynodeA=\n1};
	}},
}
\tikzset{
	pics/mynodeB/.style={
		code={
			\draw[line width=1 pt] (0,0) circle(0.35cm);
			\foreach \i in {1,...,6}
			\fill (\i*60:0.25cm) coordinate (n\i) circle(1 pt)
			\ifnum \i>1 
			foreach \j in {\i,...,1}{(n\i) edge (n\j)} 
			\fi;
		}
	},pics/mynodeB/.default=1,
	Fnode/.style={circle,path picture={
			\path let
			\p1 = (path picture bounding box.south west),
			\p2 = (path picture bounding box.north east),
			\n1 = {scalar(veclen(\x2-\x1,\y2-\y1)/1cm/sqrt(2))}
			in (path picture bounding box.center)
			pic{mynodeB=\n1};
	}},
}
\begin{document}
	
	\newtheorem{theorem}{Theorem}[section]
	\newtheorem{observation}[theorem]{Observation}
	\newtheorem{corollary}[theorem]{Corollary}
	\newtheorem{algorithm}[theorem]{Algorithm}
	\newtheorem{definition}[theorem]{Definition}
	\newtheorem{guess}[theorem]{Conjecture}
	\newtheorem{claim}{Claim}[section]
	\newtheorem{problem}[theorem]{Problem}
	\newtheorem{question}[theorem]{Question}
	\newtheorem{lemma}[theorem]{Lemma}
	\newtheorem{proposition}[theorem]{Proposition}
	\newtheorem{fact}[theorem]{Fact}

	\captionsetup[figure]{labelfont={bf},name={Fig.},labelsep=period}
	
	\makeatletter
	\newcommand\figcaption{\def\@captype{figure}\caption}
	\newcommand\tabcaption{\def\@captype{table}\caption}
	\makeatother

	\newtheorem{acknowledgement}[theorem]{Acknowledgement}
	
	\newtheorem{axiom}[theorem]{Axiom}
	\newtheorem{case}[theorem]{Case}
	\newtheorem{conclusion}[theorem]{Conclusion}
	\newtheorem{condition}[theorem]{Condition}
	\newtheorem{conjecture}[theorem]{Conjecture}
	\newtheorem{criterion}[theorem]{Criterion}
	\newtheorem{example}[theorem]{Example}
	\newtheorem{exercise}[theorem]{Exercise}
	\newtheorem{notation}{Notation}
	\newtheorem{solution}[theorem]{Solution}
	\newtheorem{summary}[theorem]{Summary}

	\newenvironment{proof}{\noindent {\bf
			Proof.}}{\rule{3mm}{3mm}\par\medskip}
	\newcommand{\remark}{\medskip\par\noindent {\bf Remark.~~}}
	\newcommand{\pp}{{\it p.}}
	\newcommand{\de}{\em} 
	\newcommand{\mad}{\rm mad} 
	\newcommand{\wrt}{with respect to } 
	
	\newcommand{\qed}{\hfill\rule{0.5em}{0.809em}}
	
	\newcommand{\var}{\vartriangle}

	\title{\Large \bf The strong fractional choice number and the strong fractional paint number of graphs}

	\author[1,2]{Rongxing Xu \thanks{E-mail: xurongxing@zjnu.edu.cn. Grant Numbers: NSFC 11871439. Supported also by Fujian Provincial Department of Science and Technology(2020J01268).}}
	\author[1]{Xuding Zhu \thanks{E-mail: xdzhu@zjnu.edu.cn. Grant Number: NSFC 11971438,12026248, U20A2068. }}
	
	\affil[1]{\small Department of Mathematics, Zhejiang Normal University, Jinhua, Zhejiang, 321000, China.}
	  
	\affil[2]{\small School of Mathematical Sciences, University of Science and Technology of China, Hefei, Anhui, 230026, China}

	\maketitle

	\begin{abstract}
		This paper studies the strong fractional choice number  $ch^s_f(G)$  and the strong fractional paint number  $\chi^s_{f,P}(G)$  of a graph $G$. We prove that these parameters of any finite graph are rational numbers. On the other hand,    for any  positive integers $p,q$   satisfying $2 \le \frac{2p}{2q+1} \leq \lfloor\frac{p}{q}\rfloor$, there exists a graph $G$ with $ch^s_f(G) = \chi^s_{f,P}(G) = \frac{p}{q}$. The relationship between $\chi^s_{f,P}(G)$ and $ch^s_f(G)$ is explored. We prove  that the gap $\chi^s_{f,P}(G)-ch^s_f(G)$ can be arbitrarily large. 
	The strong fractional choice number of a family $\mathcal{G}$ of graphs is the supremum of the strong fractional choice number of   graphs in $\mathcal{G}$.  
	Let $\mathcal{P}$ denote the class of planar graphs and $\mathcal{P}_{k_1,\ldots, k_q}$ denote the class of planar graphs without $k_i$-cycles for $i=1,\ldots, q$. We   prove that  $3 + \frac{1}{2} \leq ch^s_f(\mathcal{P}_{  4}) \leq 4$, $ch^s_f(\mathcal{P}_{  k})=4$ for $k \in \{5,6\}$, $3 +\frac{1}{12} \leq ch^s_f(\mathcal{P}_{  4,5}) \leq 4$ and $ch^s_f(\mathcal{P}) \ge 4+\frac 13$. The last result improves the lower bound $4+\frac 29$ in [Zhu, multiple list colouring of planar graphs, Journal of Combin. Th. Ser. B,122(2017),794-799].     
	\end{abstract}

	\section{Introduction}
	
	Suppose $G$ is a graph, $f$ and $g$ are two functions from $V(G)$ to $\mathbb{N}$, with $f(v) \geq g(v)$ for every $v \in V(G)$. An \emph{$f$-assignment} of $G$ is a mapping $L$ which assigns to each vertex
	$v$ of $G$ a set $L(v)$ of $f(v)$ integers as {\em permissible colours}. A {\em $g$-fold colouring} of $G$ is a mapping $S$ which
	assigns to each vertex $v$ of $G$ a set $S(v)$ of $g(v)$ colours
	such that for any two adjacent vertices $u$ and $v$, $S(u) \cap S(v)
	= \emptyset$. 
	
	Given a list assignment $L$ of $G$, an
	$(L,g)$-colouring of $G$ is a $g$-fold colouring $S$ of $G$ such that for
	each $v$, $S(v) \subseteq L(v)$. We say $G$ is \emph{$(L,g)$-colourable} if there exists an $(L, g)$-colouring of $G$. For a positive integer $a$, we write $f \equiv a$ if $f$ is the constant function with $f(v)=a$ for every vertex $v$. If $g \equiv b$, then $(L, g)$-colourable is denoted by $(L, b)$-colourable. 
	If $L(v)=\{1,2,\ldots,a\}$ for each $v \in V(G)$, then $(L, b)$-colourable is called  $(a,b)$-colourable.  The \emph{$b$-fold chromatic number} $\chi_b(G)$ of $G$ is the least $k$ such that $G$ is $(k,b)$-colourable. The $1$-fold chromatic number of $G$ is also called the chromatic number of $G$ and denoted by $\chi(G)$. The {\em fractional chromatic number}   of   $G$ is  defined as $	\chi_f(G)  = \inf \{\frac ab: \text{$G$ is $(a,b)$-colourable}\}$.

	Similarly, we say $G$ is {\em $(f,g)$-choosable} if for every $f$-list assignment $L$,
	$G$ is $(L, g)$-colourable.
	\begin{itemize}
		\item If  $f \equiv a$ and $g \equiv b$, then $(f,g)$-choosable  is called $(a, b)$-choosable.
		\item If $b=1$, then $(f,1)$-choosable is called $f$-choosable.
		\item $(a,1)$-choosable is also called $a$-choosable.
	\end{itemize}
	The {\em choice number} $ch(G)$ of $G$ is the minimum $k$ such that
	$G$ is $k$-choosable. The {\em $b$-fold choice number}  $ch_b(G)$ of $G$
	is the minimum $k$ such that $G$ is $(k,b)$-choosable. The {\em fractional choice number}   of   $G$ is defined as $ch_f(G)   = \inf \{\frac ab: \text{$G$ is $(a,b)$-choosable}\}$.
	
	List colouring of graphs was introduced in the 1970s by Vizing
	\cite{Vizing1976} and independently by Erd\H{o}s, Rubin and Taylor
	\cite{ERT1979}. The subject offers a large number of challenging
	problems and has attracted an increasing attention since 1990.
	Readers are referred to \cite{TuzaSurvey1997} for a comprehensive survey
	on results and open problems.  
	
	The paint number of a graph is a variation of the choice number of a graph.
	Given two functions $f$ and $g$ from $V(G)$ to $N$, with $f(v) \ge g(v)$ for all $v \in V(G)$, the {\em $(f,g)$-painting game on $G$} is played by two players: Lister and Painter. Initially, each vertex $v$ is given $f(v)$ tokens and is uncolourred.  On each round, Lister selects a set $U$ of  vertices and removes one token from each chosen vertex.
	Painter chooses an independent subset $I$ of $U$ and colours each vertex of $I$ with a new colour.
	If at the end of some round, there is a vertex $v$ with no tokens left and coloured with less than $g(v)$ colours, then Lister wins the game.
	If at the end of some round, each vertex $v$ is coloured with $g(v)$ colours, then Painter wins the game.
	We say $G$ is {\em $(f,g)$-paintable } if Painter has a winning strategy for the $(f,g)$-painting game. 
	\begin{itemize}
		\item If $f \equiv a$ and $g \equiv b$,, then $(f,g)$-paintable  is called $(a, b)$-paintable.
		\item If $b=1$, then $(f,b)$-paintable is called $f$-paintable.
		\item $(a,1)$-paintable is also called $a$-paintable.
	\end{itemize} 
	
	The  \emph{$b$-fold paint number} $\chi_{b, P}(G)$ is the minimum $k$ such that $G$ is $(k,b)$-paintable, and $\chi_{1, P}(G)$ is called the {\em paint number} (or the {\em online choice number}) of $G$, and denoted by $\chi_P(G)$.    The {\em fractional paint number}   of  $G$ is defined as $\chi_{f,P}(G)=\inf \{\frac ab: \text{$G$ is $(a,b)$-paintable}\}$.   
	
	It follows from the definition that for any graph $G$, $\chi_f(G) \le ch_f(G) \le \chi_{f,P}(G)$. 
	It was proved in \cite{ATV1997} that $\chi_f(G) = ch_f(G) $ for every graph $G$, and proved in \cite{Gutowski2011} that $\chi_f(G) = \chi_{f,P}(G) $ for every graph $G$. So the fractional chromatic number, the fractional choice number and the fractional paint number of a graph are a same invariant.  As a variation of the fractional choice number, the concept of strong fractional choice number of a graph was introduced in \cite{Zhu2017}. 
	\begin{definition}
		\label{def-strong}
		Assume $G$ is a graph and $r$ is a real number. We say $G$ is {\em strongly fractional $r$-choosable} (respectively,   {\em strongly fractional $r$-paintable} or {\em strongly fractional $r$-colourable} ) if $G$ is $(a,b)$-choosable (respectively,  $(a,b)$-paintable, or $(a,b)$-colourable) for any $(a,b)$ for which $\frac ab \ge r$. The {\em strong fractional choice number}  of   $G$
		is defined as $$ch^s_f(G)= \inf \{r \in \mathbf{R}:  G \ \text{is strongly fractional $r$-choosable}\}.$$
		The {\em strong fractional paint number}  of   $G$
		is defined as $$\chi^s_{f,P}(G)= \inf \{r \in \mathbf{R}:  G \ \text{  is strongly fractional $r$-paintable}\}.$$
		We also define the {\em strong fractional chromatic number}  of   $G$ as 
		$$\chi^s_f(G)= \inf \{r \in \mathbf{R}:  G \ \text{is strongly fractional $r$-colourable}\}.$$
		The strong fractional choice number, the strong fractional paint number and the strong fractional chromatic number of a class $\mathcal{G}$ of graphs is defined as 
		$$ch^s_f(\mathcal{G})=\sup\{ch^s_{f}(G):G \in \mathcal{G}\}, \  \chi_{f,P}^s(\mathcal{G})=\sup\{\chi_{f,P}^s(G):G \in \mathcal{G}\}, \  \chi_{f}^s(\mathcal{G})=\sup\{\chi_{f}^s(G):G \in \mathcal{G}\}.$$
	\end{definition}
	
	The paper studies basic properties of these parameters, and upper and lower bounds for these parameters for special families of graphs. 
	In Section \ref{basic-properties}, we show that both $ch^s_f(G)$ and $\chi^s_{f,P}(G)$ are rational numbers. 
	In Section \ref{rational}, we study the problem as  which  rational numbers are  the strong fractional choice number and strong fractional paint number of graphs.   We conjecture that for every rational $ r \ge 2$,   there exists a graph $G$ with $ch^s_f(G)=r$ and a graph $G$ with $\chi^s_{f,P}(G)=r$, and prove that  for any  positive integers $p,q$, where $p \geq  2q$  satisfying $\frac{2p}{2q+1} \leq \lfloor\frac{p}{q}\rfloor$, there exists a graph $G$ with $ch^s_f(G) = \chi^s_{f,P}(G) = \frac{p}{q}$.
	In Section \ref{sec:uppper-bound}, we show that the gap $\chi^s_{f,P}(G)-ch^s_f(G)$ can be arbitrarily large.  
	In Section \ref{section-planar-lower-bound},  we study upper and lower bounds for the strong fractional choice number of planar graphs. Let ${\cal P}$ denote the family of planar graphs and for positive integers $k_1, k_2, \ldots, k_q$, let ${\cal P}_{k_1, \ldots,k_q}$ be the family of planar graphs without cycles of lengths $k_i$ for $i=1,\ldots, q$.  It was proved in \cite{Zhu2017} that $ch_f^s({\cal P}) \ge 4+\frac 29$. We improve this result and prove that $ch_f^s({\cal P}) \ge 4+\frac 13$.   It is also shown that   $3 + \frac{1}{2} \leq ch^s_f(\mathcal{P}_{4}) \leq 4$, $ch^s_f(\mathcal{P}_{k})=4$ for $k \in \{5,6\}$, and $3 +\frac{1}{12} \leq ch^s_f(\mathcal{P}_{4,5}) \leq 4$. 
	Some open problems are posed in Section \ref{open problem}.

	\section{Basic Properties} 
	\label{basic-properties}
	  
Lemma \ref{lem-alternatedef} gives an alternative definitions of $ch^s_f(G)$ and $\chi^s_{f,P}(G)$.

	\begin{lemma}
		\label{lem-alternatedef}
		For any graph $G$,
		$$ch^s_f(G)= \sup\{ \frac{ch_k(G)-1}{k}: k \in \mathbf{N}\}, \ \chi^s_{f,P}(G)= \sup\{ \frac{\chi_{k, P}(G)-1}{k}: k \in \mathbf{N}\}.$$
	\end{lemma}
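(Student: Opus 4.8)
The plan is to derive both identities from a single structural fact—monotonicity in the number of permissible colours $a$ for a fixed fold number $b$. First I would record, for choosability, that if $G$ is $(a,b)$-choosable and $a'\ge a$, then $G$ is $(a',b)$-choosable: given any $a'$-list assignment $L$, delete $a'-a$ colours from each list to obtain an $a$-list assignment $L'\subseteq L$, apply $(a,b)$-choosability to get an $(L',b)$-colouring, and observe it is also an $(L,b)$-colouring. The analogous statement for the painting game is that extra tokens only help Painter: with $a'$ tokens per vertex, Painter runs a winning $(a,b)$-strategy while treating $a'-a$ tokens per vertex as hidden reserves, so every vertex in the real game runs out of tokens strictly later than in the simulated $(a,b)$-game and is therefore coloured $b$ times before exhausting its tokens. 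Combined with the definitions of $ch_b(G)$ and $\chi_{b,P}(G)$ as the least admissible $a$, this yields the clean characterizations: $G$ is $(a,b)$-choosable iff $a\ge ch_b(G)$, and $(a,b)$-paintable iff $a\ge \chi_{b,P}(G)$.

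I would prove only the choosability identity, since the paint identity is word-for-word identical with $ch_b$ replaced by $\chi_{b,P}$ (using the painting monotonicity above). Write $S=\sup\{(ch_k(G)-1)/k : k\in\mathbf{N}\}$. For the bound $ch^s_f(G)\ge S$, suppose $G$ is strongly fractional $r$-choosable and fix any $k\in\mathbf{N}$. By minimality of $ch_k(G)$, the graph $G$ is \emph{not} $(ch_k(G)-1,k)$-choosable, so strong fractional $r$-choosability forces $(ch_k(G)-1)/k<r$ (otherwise this pair would have to be choosable). Taking the supremum over $k$ gives $S\le r$, and then taking the infimum over all admissible $r$ gives $S\le ch^s_f(G)$.

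For the reverse bound $ch^s_f(G)\le S$, fix any real $r>S$ and any pair $(a,b)$ with $a/b\ge r$. Then $a/b>S\ge (ch_b(G)-1)/b$, so $a>ch_b(G)-1$; since $a$ and $ch_b(G)$ are integers this gives $a\ge ch_b(G)$, whence $G$ is $(a,b)$-choosable by the monotonicity characterization. As the pair was arbitrary, $G$ is strongly fractional $r$-choosable for every $r>S$, so $ch^s_f(G)=\inf\{r:\dots\}\le S$. Combining the two bounds yields $ch^s_f(G)=S$, and the same two-step argument applied to the painting game yields $\chi^s_{f,P}(G)=\sup\{(\chi_{k,P}(G)-1)/k:k\in\mathbf{N}\}$. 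I expect the routine algebra of the two inequality chains to be painless; the two points needing genuine care are the painting-game monotonicity (making precise that Painter's hidden reserves keep every virtual move legal and preserve the winning guarantee) and the matching of the strict inequality $a/b>S$ with the non-strict hypothesis $a/b\ge r$, where the integrality step $a>ch_b(G)-1\Rightarrow a\ge ch_b(G)$ is exactly what lets the ``$-1$'' in the formula line up with the infimum defining $ch^s_f(G)$.
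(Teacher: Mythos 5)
Your proof is correct and takes essentially the same route as the paper's: both directions come down to the minimality of $ch_k(G)$ (hence non-$(ch_k(G)-1,k)$-choosability) and monotonicity in the number of colours/tokens (hence $(a,b)$-choosability whenever $a \ge ch_b(G)$), pushed through the $\inf$/$\sup$ definitions, with the identical argument repeated for paintability. The only differences are presentational: you work with the supremum $S$ directly and state the monotonicity lemma (including the token-simulation argument for the painting game) explicitly, whereas the paper argues via $\epsilon$-approximations and leaves monotonicity implicit.
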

	
	\begin{proof}
		Let $r = \sup\{ \frac{ch_k(G)-1}{k}: k \in \mathbf{N}\}$.
		Then for any $\epsilon > 0$, there is an integer $k$ such that $(r-\epsilon)k <
		ch_k(G)-1$. Thus $\lceil (r-\epsilon)k \rceil < ch_k(G)$ and  $G$ is not $\lceil (r-\epsilon)k \rceil, k)$-choosable. Therefore, $ch^s_f(G) \ge r-\epsilon$ 
		for any $\epsilon > 0$, which implies that
		$ch^s_f(G) \ge r$. On the other hand, for any $\epsilon > 0$, for any integer $k$,
		$\lceil (r+\epsilon) k \rceil \ge ch_k(G)$. Hence $G$ is $(\lceil (r+\epsilon) k \rceil,
		k)$-choosable. So $ch^s_f(G) \le r+\epsilon$ for any $\epsilon > 0$, which implies that
		$ch^s_f(G) \le r$. Therefore $ch^s_f(G)=r$. The other part of Lemma \ref{lem-alternatedef} is
		proved similarly.
	\end{proof}

	The following lemma was proved in \cite{Gutowski2011}. For the completeness of this paper, we present a
	short proof.
	
	\begin{lemma}
		\label{lem-fracchi}
		Assume $G$ is a finite graph. Then for any $\epsilon > 0$, there is a constant $k_0$ such that
		for any $k\ge k_0$,
		$\frac{ch_k(G)}{k} \le  \frac{\chi_{k, P}(G)}{k} \le \chi_f(G)+\epsilon$.
	\end{lemma}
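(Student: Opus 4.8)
The plan is to prove the two inequalities separately. The left-hand inequality amounts to $ch_k(G)\le \chi_{k,P}(G)$, i.e.\ that $(m,k)$-paintability implies $(m,k)$-choosability, and I would obtain it by letting Lister emulate an arbitrary $m$-list assignment $L$: process the colours one at a time, and for colour $c$ let Lister mark exactly $\{v : c\in L(v)\}$, removing one token from each such vertex. Reading Painter's response for colour $c$ as ``colour these vertices with $c$'', a winning Painter strategy colours an independent subset of $\{v: c\in L(v)\}$ at each stage; since each $v$ is marked exactly $|L(v)|=m$ times and ends up with $k$ colours, all taken from $L(v)$ and pairwise proper, this is an $(L,k)$-colouring. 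Hence $G$ is $(m,k)$-choosable whenever it is $(m,k)$-paintable.

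For the right-hand inequality it suffices to show that $G$ is $(m,k)$-paintable for $m=\lceil(\chi_f(G)+\epsilon)k\rceil$ once $k$ is large. Because $G$ is finite, $\chi_f(G)$ is attained: there are integers $a,b$ with $a/b=\chi_f(G)$ and a $b$-fold colouring of $G$ with $a$ colours, which I record as independent sets $C_1,\dots,C_a$ with every vertex lying in exactly $b$ of them. I would then analyse the $(m,k)$-painting game under the following randomized Painter strategy: in each round, after Lister has committed to its set $U$, Painter draws an index $i\in\{1,\dots,a\}$ uniformly at random and colours the independent set $U\cap C_i$ with a fresh colour.

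The crucial point is that Lister fixes $U$ before the round's random index is drawn. Hence, for a fixed vertex $v$, each round with $v\in U$ is an independent trial that colours $v$ with probability $|\{i:v\in C_i\}|/a=b/a=1/\chi_f(G)$, and successive successes use distinct colours; moreover the adaptive choice of whether to include $v$ is made without seeing the current coin, so it cannot bias these trials. Consequently, if Lister ever drives $v$ to zero tokens, which is the only way to threaten $v$, the number of colours on $v$ at that moment is distributed as $\mathrm{Bin}(m,b/a)$, whose mean $m\cdot b/a\ge k(1+\epsilon/\chi_f(G))$ exceeds the target $k$. A Chernoff bound on the lower tail gives $\Pr[v\text{ receives fewer than }k\text{ colours}]\le e^{-c\,\epsilon^2 k}$ for a constant $c=c(\chi_f(G))>0$, and a union bound over the vertices shows that the probability that Lister ever wins is at most $|V(G)|\,e^{-c\,\epsilon^2 k}$, which is below $1$ for all $k\ge k_0=k_0(\epsilon,G)$.

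It remains to convert this into a genuine winning strategy. Since the total number of tokens is $m|V(G)|$, only finitely many rounds remove a token and the rest may be ignored, so the game is a finite two-player game of perfect information and is determined. If Lister had a winning strategy it would defeat every play of the randomized Painter, in particular forcing a loss with probability $1$, contradicting the bound just obtained; hence Lister has no winning strategy and Painter has one. Therefore $\chi_{k,P}(G)\le \lceil(\chi_f(G)+\epsilon)k\rceil$, so $\chi_{k,P}(G)/k\le \chi_f(G)+\epsilon+1/k\le \chi_f(G)+2\epsilon$ for $k\ge k_0$, and rescaling $\epsilon$ gives the stated bound. I expect the main obstacle to be exactly this passage from a high-probability randomized strategy to a deterministic winning one together with the correct handling of Lister's adaptivity; the device that makes both work is the order of moves within a round, which keeps the per-vertex trials independent of Lister's choices and reduces the analysis to a single binomial tail.
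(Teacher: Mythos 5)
Your proof is correct, but it reaches the right-hand inequality by a genuinely different route than the paper. Both arguments start the same way: fix integers $a,b$ with $\chi_f(G)=a/b$ attained by a $b$-fold colouring with colour classes $C_1,\dots,C_a$, and let Painter answer Lister's set $U$ by colouring $U\cap C_i$ for a suitable index $i$. The difference is how $i$ is chosen and how the analysis is closed. The paper chooses $i$ \emph{deterministically}, by a round-robin scheme run separately for each distinct set $U$: among the rounds in which the same set $U$ is presented, the indices $1,\dots,a$ are cycled, so each index is used at least $\lfloor t_U/a\rfloor \ge t_U/a - 1$ times; summing over the at most $2^{|V(G)|}$ possible sets containing a vertex $v$ gives $k(v)\ge \frac{bm}{a}-b2^{|V(G)|}$, which yields an explicit winning strategy with threshold $k_0 > a2^{|V(G)|}/\epsilon$. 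You instead choose $i$ uniformly at random each round, bound the failure probability at each vertex by a binomial tail, take a union bound, and then invoke determinacy of the finite perfect-information game to convert ``Lister has no winning strategy'' into ``Painter has a winning strategy''. Your route buys a much smaller threshold --- $k_0$ of order $\log|V(G)|/\epsilon^2$ rather than exponential in $|V(G)|$ --- at the price of a non-constructive Painter strategy; the paper's per-set round-robin is in effect a derandomization of your coin flips, giving an explicit strategy but a far worse $k_0$. One sentence of yours needs tightening: the number of colours on $v$ at depletion is not literally distributed as $\mathrm{Bin}(m,b/a)$, since conditioning on the event that Lister actually marks $v$ all $m$ times can bias the trials (Lister may continue or abandon $v$ depending on earlier coin outcomes). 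What is true, and suffices, is the inequality
\begin{equation*}
\Pr\bigl[v \text{ is marked } m \text{ times and receives fewer than } k \text{ colours}\bigr]\;\le\;\Pr\bigl[\mathrm{Bin}(m,b/a)<k\bigr],
\end{equation*}
which follows from exactly the observation you make --- each marking is resolved by a coin flipped after Lister commits --- made rigorous by coupling the marking-time indicators with an i.i.d.\ Bernoulli sequence (or by optional stopping of the exponential supermartingale). With that standard repair your argument is complete; note also that the left inequality $ch_k(G)\le\chi_{k,P}(G)$, which you prove by having Lister emulate a list assignment, is the standard fact the paper's proof simply takes for granted.
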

	\begin{proof}
		Assume $\chi_f(G)=a/b$ and $\phi$ is a $b$-fold colouring of $G$ using colours
		$\{1,2,\ldots, a\}$ ($a,b$ need not be coprime).
		Assume $k > \frac{a2^{|V(G)}|}{\epsilon}$  and let $m= k(\frac{a}{b}+\epsilon)$ (for simplicity, we may choose $\epsilon$ so that $k(\frac{a}{b}+\epsilon)$ is an integer). It suffices to show that Painter has a winning
		strategy for
		the $(m,k)$-painting game on $G$. 
		
		For $i=1,2, \ldots$, assume the set   chosen by
		Lister at the $i$th round is $U_i$.
		Let  $$t_i=|\{j \le i: U_j=U_i\}|,$$ and let $\tau_i \in \{1,2, \ldots, a\}$ be the unique integer for which 
		$\tau_i \cong t_i \mod{a}$.  
		Painter's strategy is to colour all
		the vertices in the set $I_i=\phi^{-1}(\tau_i)\cap U_i$ in the $i$th round. As $\phi^{-1}(\tau_i)$ is an independent set, $I_i$ is an independent set. 
		
		We shall show that this is a winning strategy for Painter, i.e., when the game ends, every vertex will be coloured by at least $k$ colours.  
		
		Assume to the contrary that at the end of some round, say at the end of the $i$th round,
		a vertex $v$ has no token left (hence $v$ has been chosen $m = k(\frac{a}{b}+\epsilon))$ times by
		Lister)
		and is coloured in   $k(v) < k$ rounds.
		
		For each subset $U$ of $V(G)$ and for each  $t \in \{1,2,\ldots, a\}$,  let $$(U,t) = \{j \le i: U_j=U,  \tau_j=t\}.$$
		
		By the strategy, for each $j \le i$, if $j \in (U,t)$, $v \in U$ and $t \in \phi(v)$, then $v$
		is coloured in round $j$.
		Therefore,  $$k(v) = \sum_{ v \in U, t \in \phi(v)}|(U,t)|.$$
		
		For each subset $U$ of $V(G)$, let $t_U = |\{j \le i: U_j=U\}|$. It follows from the choice of
		colour $\tau_j$ that
		for any colour $t$, either $|(U,t)|=\lfloor \frac{t_U}{a} \rfloor$ or $ |(U,t)|= \lceil \frac{t_U}{a} \rceil$. Therefore 
		$$|(U,t)| \ge   \frac{t_U}{a} -1.$$
		
		Note that $\sum_{v \in U}t_U=m$ is the total number of times vertex $v$ is chosen by Lister.
		Since $\phi(v)$ is a $b$-subset of $\{1,2,\ldots,a\}$, we conclude that
		$$k(v)=\sum_{ v \in U, t \in \phi(v)}|(U,t)| \ge b \sum_{ v \in U} \left(\frac{t_U}{a}-1\right)\geq \frac{bm}{a}-b2^{|V(G)|}=\frac{k(a/b+\epsilon)b}{a}-b2^{|V(G)|} \ge k,$$ a contradiction.
	\end{proof}

	\begin{theorem}
		\label{thm-rational}
		For any finite graph $G$, $ch^s_f(G)$ and $\chi^s_{f,P}(G)$ are rational numbers.
	\end{theorem}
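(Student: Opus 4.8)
The plan is to combine the two preceding lemmas. By Lemma~\ref{lem-alternatedef}, $ch^s_f(G)=\sup\{a_k : k\in\mathbf{N}\}$ where $a_k=\frac{ch_k(G)-1}{k}$. Each $a_k$ is rational, since $ch_k(G)$ is a positive integer. A supremum of rationals need not be rational, so the crux is to show that this particular supremum is in fact a \emph{maximum} over a finite initial segment of $k$'s (or else equals the rational number $\chi_f(G)$). The key is that Lemma~\ref{lem-fracchi} pins the tail of the sequence $(a_k)$ down near $\chi_f(G)$.

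First I would record the lower bound $ch^s_f(G)\ge \chi_f(G)$. Since $\chi_k(G)\le ch_k(G)$, and $\chi_f(G)=\inf_b \chi_b(G)/b$ gives $\chi_k(G)\ge k\,\chi_f(G)$, we get $a_k\ge \frac{k\chi_f(G)-1}{k}=\chi_f(G)-\frac1k$, so letting $k\to\infty$ yields $\sup_k a_k\ge \chi_f(G)$.

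Now set $r=ch^s_f(G)$. If $r=\chi_f(G)$ then $r$ is rational (the fractional chromatic number of a finite graph is rational), and we are done. Otherwise $\delta:=r-\chi_f(G)>0$, and I would invoke Lemma~\ref{lem-fracchi} with $\epsilon=\delta/2$ to obtain a threshold $k_0$ such that for every $k\ge k_0$,
$$a_k\le \frac{ch_k(G)}{k}\le \chi_f(G)+\frac{\delta}{2}=r-\frac{\delta}{2}<r.$$
Thus $\sup_{k\ge k_0}a_k\le r-\delta/2<r$, so the supremum cannot be approached along the tail; it must be achieved on the finite set $\{1,\dots,k_0-1\}$, i.e.\ $r=\max_{1\le k<k_0}a_k$. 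As a maximum of finitely many rationals, $r$ is rational.

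The identical argument, with $\chi_{k,P}(G)$ in place of $ch_k(G)$ and using the paint-number halves of both lemmas (note $\chi_k(G)\le\chi_{k,P}(G)$ and the same bound in Lemma~\ref{lem-fracchi}), shows $\chi^s_{f,P}(G)$ is rational. I expect the only real content to be the dichotomy in the previous paragraph: everything hinges on Lemma~\ref{lem-fracchi} forcing $a_k$ to stay a fixed distance below $r$ once $k$ is large, which is what collapses the supremum to a finite maximum; the arithmetic itself is routine.
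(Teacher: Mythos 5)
Your proposal is correct and follows essentially the same route as the paper's proof: both combine Lemma~\ref{lem-alternatedef} with Lemma~\ref{lem-fracchi} to split into the case $ch^s_f(G)=\chi_f(G)$ (rational) and the case where the tail of the sequence $\frac{ch_k(G)-1}{k}$ is forced strictly below the supremum, so that the supremum collapses to a maximum over finitely many rationals. The only cosmetic difference is that the paper phrases the dichotomy as whether some term $\frac{\chi_{k_0,P}(G)-1}{k_0}$ exceeds $\chi_f(G)$ and uses that gap as $\epsilon$, while you phrase it as whether $r>\chi_f(G)$ and use $\epsilon=\delta/2$; these are equivalent.
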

	
	\begin{proof}
		If  $\frac{\chi_{k,P}(G)-1}{k} \le \chi_f(G)$ for every positive integer $k$, then
		it follows from Lemma \ref{lem-alternatedef}
		that $\chi^s_{f,P}(G) \le \chi_f(G)$. Since
		$\chi_f(G) \le \chi^s_{f,P}(G)$, we conclude that
		$\chi^s_{f,P}(G)=\chi_f(G)$, which is  a rational number. 
		
		Assume there is an integer $k_0$ such that
		$\frac{\chi_{k_0,P}(G)-1}{k_0} > \chi_f(G)$. Let $\epsilon = \frac{\chi_{k_0,P}(G)-1}{k_0} - \chi_f(G) > 0$.
		By Lemma \ref{lem-fracchi}, there is a constant $k_1 \ge k_0$ such that for $k \ge k_1$,
		$\frac{\chi_{k,P}(G)}{k} \le \chi_f(G)+\epsilon$.
		Hence
		$$\sup\{ \frac{\chi_{k, p}(G)-1}{k}: k \in \mathbf{N}, k  \ge k_1\} \le
		\frac{\chi_{k_0,P}(G)-1}{k_0}.$$
		Therefore $$\chi_{f,P}^s(G) = \sup\{ \frac{\chi_{k,P}(G)-1}{k}: k \in \mathbf{N}\} = \max
		\{ \frac{\chi_{k,P}(G)-1}{k}: 1 \le k \le k_1\}$$
		is a rational number. Moreover, the supremum in Lemma \ref{lem-alternatedef} is attained.
		
		The part of the lemma concerning $ch^s_f(G)$ is proved similarly.
	\end{proof}
	
	Lemma \ref{lem-alternatedef} gives an alternate definition of $ch_f^s(G)$ and $\chi_{f,P}^s(G)$. It follows from the proof of Theorem \ref{thm-rational} that either $ch_f^s(G)=\chi_f(G)$ or the supremum in the definition 
	$ch_f^s(G) = \sup\{ \frac{ch_k(G)-1}{k}: k \in \mathbf{N}\}$ is attained.
	However, the infimum in the definition $ch_f^s(G) = \inf \{r: \text{$G$ is strongly fractional $r$-choosable}\}$ may be not attained even if $ch_f^s(G) \ne \chi_f(G)$.
	Similarly,  either $\chi_{f,P}^s(G) = \chi_f(G)$ or the sumpremum in the definition $\chi_{f,P}^s(G) = \sup\{ \frac{\chi_{k,P}(G)-1}{k}: k \in \mathbf{N}\}$ is attained. But the infimum in the definition $\chi_{f,P}^s(G) = \inf \{r: \text{$G$ is strongly fractional $r$-paintable}\}$ may be not attained even if $\chi_{f,P}^s(G) \ne \chi_f(G)$.

	\section{Constructing graphs with given
		$ch^s_f(G)$ 
		and 
		$\chi^s_{f, P}(G)$}
	\label{rational}
	
	By Theorem \ref{thm-rational}, for any finite graph $G$,  $ch^s_{f}(G)$ and $\chi^{s}_{f,P}(G)$ are rational numbers. A natural question is whether every rational number $r \ge 2$ is the strong fractional choice (paint) numbers of a graph.  We conjecture that the answer is yes. In this section, for some rational numbers $p/q$, we construct graphs $G$ with $\chi_{f,P}^s(G) = ch_f^s(G) = p/q$.

	Given a graph $G$, a subset $S$ of $G$ and two graphs $H_1$ and $H_2$, let    $G[S:H_1, H_2]$ be the graph with  
	\begin{align*}
		V(G[S:H_1, H_2]) & = \{(u,v): u \in S \text{\ and \ } v \in V(H_1), \text{ or\ } u \in V(G) \setminus S \text{\ and \ } v \in V(H_2)\}, \\
		E(G[S:H_1, H_2]) & = \{(u,v)(u',v'): uu' \in E(G), \text{ \ or } u=u' \in S, vv' \in E(H_1)  \\
		& \text{ or }
		u=u' \in V(G) \setminus S, vv' \in E(H_2)\}.
	\end{align*}
	Note that if $S=V(G)$ or $H_1=H_2$, then  $G[S:H_1,H_2]=G[H_1]$ is the {\em lexicographic product} of $G$ and $H_1$. In the rest of this section, we let $G_{n,m,k}$ denote the graph $C_{2k+1}[I:K_n, K_m]$, where $I$ is  a maximum independent set of $C_{2k+1}$, see Fig.\ref{replacing-example} for the example of $G_{6,4,3}$.
	\begin{figure}[htbp]
		\centering
		\begin{tikzpicture}[>=latex,	
			roundnode/.style={circle, draw=black,fill= red, minimum size=1mm, inner sep=0pt}]  
			\node[Fnode, minimum size=0.7cm] (A1) at (90-720/7:1.5) {};
			\node[Tnode, minimum size=0.6cm] (A2) at (90-360/7:1.5) {};
			\node[Fnode,minimum size=0.7cm] (A3) at (90:1.5) {};
			\node[Tnode,minimum size=0.6cm] (A4) at (90+360/7:1.5) {};
			\node[Fnode,minimum size=0.7cm] (A5) at (90+720/7:1.5) {};
			\node[Tnode,minimum size=0.6cm] (A6) at (90+1080/7:1.5) {};
			\node[Tnode,minimum size=0.6cm] (A7) at (90-1080/7:1.5) {};
			\draw[line width = 2pt] (A1)--(A2)--(A3)--(A4)--(A5)--(A6)--(A7)--(A1);
		\end{tikzpicture}
		\caption{$G_{6,4,3}$}
		\label{replacing-example}
	\end{figure}
	
	\begin{theorem}
		\label{thm:n+m+m/q}
		For any positive integer $n, m, k$ with $n \geq m$,  
		$$ch_{f}^s(G_{n,m,k})=\chi^s_{f,P}(G_{n,m,k})=\chi_f(G_{n,m,k})=n+m+\frac{m}{k}.$$
	\end{theorem}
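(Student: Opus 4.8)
The plan is to show that all four quantities coincide by squeezing them between $\chi_f(G_{n,m,k})$ from below and $\chi^s_{f,P}(G_{n,m,k})$ from above. Since $(a,b)$-paintable implies $(a,b)$-choosable, we have $ch^s_f(G)\le \chi^s_{f,P}(G)$ for every graph; and since strong fractional $r$-choosability forces $(a,b)$-colourability for all $a/b\ge r$, it forces $\chi_f(G)\le r$, whence $\chi_f(G)\le ch^s_f(G)$. Therefore
\[
n+m+\tfrac{m}{k} \;\stackrel{?}{=}\; \chi_f(G_{n,m,k}) \;\le\; ch^s_f(G_{n,m,k}) \;\le\; \chi^s_{f,P}(G_{n,m,k}),
\]
and it suffices to prove the two endpoint facts $\chi_f(G_{n,m,k})=n+m+\frac{m}{k}$ and $\chi^s_{f,P}(G_{n,m,k})\le n+m+\frac{m}{k}$.

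For the fractional chromatic number I would use that $G_{n,m,k}$ is a clique blow-up of $C_{2k+1}$ with demand $w(v)=n$ on the $k$ vertices of the maximum independent set $I$ and $w(v)=m$ on the remaining $k+1$ vertices, so $\chi_f(G_{n,m,k})$ is the weighted fractional chromatic number of $C_{2k+1}$ for these demands. The lower bound comes from the fractional clique that puts weight $1/k$ on every vertex of $C_{2k+1}$: every independent set of $C_{2k+1}$ has at most $k$ vertices, so this weighting is feasible, and its value is $\frac1k\bigl(kn+(k+1)m\bigr)=n+m+\frac{m}{k}$. For the matching upper bound I would exhibit an explicit fractional cover of $w$ by independent sets: assign weight $n-m$ (which is $\ge 0$ since $n\ge m$) to $I$ itself, and weight $m/k$ to each of the $2k+1$ maximum independent sets of $C_{2k+1}$, i.e. the rotations of $I$. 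Each vertex lies in exactly $k$ of these $2k+1$ sets, so every vertex receives total weight $m$ from the uniform part and every vertex of $I$ receives an additional $n-m$; the demands are met exactly, with total weight $(n-m)+(2k+1)\frac{m}{k}=n+m+\frac{m}{k}$. As all weights are rational the cover scales to an honest $(a,b)$-colouring, giving $\chi_f(G_{n,m,k})=n+m+\frac{m}{k}$.

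For the upper bound on $\chi^s_{f,P}$ I would invoke the alternative formula $\chi^s_{f,P}(G)=\sup_{b}\frac{\chi_{b,P}(G)-1}{b}$ of Lemma \ref{lem-alternatedef}, which reduces everything to the single paintability estimate
\[
\chi_{b,P}(G_{n,m,k}) \;\le\; b(n+m) + \Bigl\lfloor \tfrac{bm}{k}\Bigr\rfloor + 1 \qquad \text{for every } b\in\mathbf{N},
\]
since then $\frac{\chi_{b,P}(G_{n,m,k})-1}{b}\le n+m+\frac{\lfloor bm/k\rfloor}{b}\le n+m+\frac{m}{k}$. So the problem collapses to producing a Painter strategy for the $(a,b)$-painting game on $G_{n,m,k}$ with $a=b(n+m)+\lfloor bm/k\rfloor+1$. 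The strategy I would build operates at the level of the $2k+1$ cliques: in any round Painter can colour at most one vertex in each clique and the touched cliques must form an independent set of $C_{2k+1}$, so a round advances at most $k$ vertices. Within a clique Painter always colours a most urgent vertex (fewest tokens remaining), reducing the intra-clique bookkeeping to the fact that $K_s$ is $(bs,b)$-paintable; across cliques Painter cycles colouring events through the $2k+1$ rotations of $I$ so that, over any long stretch, each clique is served at its fair rate $1/k$ while the $k$ large cliques are additionally served by the reserved copy $I$, mirroring the fractional cover above.

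The main obstacle is exactly this paintability estimate with its sharp additive term. The generic round-robin strategy of Lemma \ref{lem-fracchi} only yields $\chi_{b,P}(G)\le b\chi_f(G)+C$ with $C$ of order $2^{|V(G)|}$, which is fatal here: because $\chi^s_{f,P}$ is a supremum over \emph{all} $b$, including small $b$, any additive error that does not vanish would push $\frac{\chi_{b,P}-1}{b}$ above $n+m+\frac{m}{k}$. The strategy must therefore be essentially optimal, and the delicate point is to prove that the only unavoidable overhead beyond the $b(n+m)$ tokens forced by a pair of adjacent cliques of sizes $n$ and $m$ is precisely the odd-cycle defect $\lfloor bm/k\rfloor+1$, that is, that Lister cannot exploit the single odd edge of $C_{2k+1}$ to force more. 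I would control this with a potential function on the per-vertex pairs (tokens left, colours still needed) and verify that Painter's clique-wise greedy choice together with the rotation schedule keeps the potential from ever signalling a doomed vertex; showing that this invariant survives Lister's adversarial choice of the selected set $U$ is where the real work lies.
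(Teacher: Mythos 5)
Your framing coincides with the paper's: both proofs squeeze $ch^s_f$ between $\chi_f$ and $\chi^s_{f,P}$, both obtain the lower bound from $\chi_f(G_{n,m,k})\ge |V(G_{n,m,k})|/\alpha(G_{n,m,k})=n+m+\frac{m}{k}$ (your uniform fractional-clique weighting is just the dual form of this inequality), and both reduce the whole theorem to an upper bound on paintability. Your explicit fractional cover by rotations of $I$ is correct but redundant once that upper bound is available, and your reduction via Lemma \ref{lem-alternatedef} to the estimate $\chi_{b,P}(G_{n,m,k})\le b(n+m)+\lfloor bm/k\rfloor+1$ is sound, since $\lfloor bm/k\rfloor/b\le m/k$.

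The gap is that this paintability estimate --- which is the entire content of the theorem --- is never proved, and you concede as much ("where the real work lies"). Your sketch (serve cliques at rate $1/k$ by cycling through rotations of $I$, colour a most-urgent vertex inside each clique, control everything by an unspecified potential function) does not engage with the one feature that makes the online problem hard: Lister may adaptively choose sets that meet some of the $2k+1$ cliques but not all. The paper's strategy is built around exactly this dichotomy. If $\partial(U_i)=\{0,1,\dots,2k\}$, Painter colours a rotating pattern $\{\tau_i,\tau_i+2,\dots,\tau_i+2k-2\}$, and such rounds are globally rare: every $2k+1$ of them colour $k$ vertices of $V_0$, which needs only $mb$ colourings in total, so they occur at most $(2k+1)mb/k$ times (Claim \ref{giveup}). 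If instead some clique is untouched, Painter anchors a greedy traversal at an untouched clique $V_s$ and takes a vertex of each touched clique whose predecessor clique received nothing; this guarantees that in every round in which a vertex $v\in V_j$ is chosen but not coloured, some vertex of $V_{j-1}\cup(V_j\setminus\{v\})$ is coloured. The final accounting charges the at least $a-b+1$ failed rounds of a hypothetical doomed vertex $v$ against at most $(n+m-1)b$ colourings of vertices in $V_{j-1}\cup(V_j\setminus\{v\})$ plus at most $\lceil mb/k\rceil$ rotation rounds with $\tau_i=j+1$, forcing $a/b<n+m+\frac{m}{k}$, a contradiction. None of this mechanism is recoverable from what you wrote: without the untouched-clique anchor the cyclic greedy choice is not even well defined on an odd cycle, and without the bound on rotation rounds the charging argument collapses. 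So the proposal sets up the correct frame and the easy half, but omits the proof of the only hard step.
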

	
	\begin{proof}	
		Assume the vertices of $C_{2k+1}$ are $(v_0,v_1,\ldots v_{2k})$ in this cyclic order, and assume that $I= \{v_1, v_3, \ldots, v_{2k-1}\}$ and  $G_{n,m,k} = C_{2k+1}[I: K_n, K_m]$. 
		
		For   $s \in \{0,1,\ldots, 2k\}$, let $$V_s=\{(x,y) \in V(G_{n,m,k}): x=v_s\}.$$ For any vertex set $S\subseteq V(G_{n,m,k})$, let $$\partial(S) = \{s: S \cap V_s \neq \emptyset\}.$$
		
		It is clear  that $\alpha(G_{n,m,k})= \alpha(C_{2k+1})=k$. It is well known that 
		$\chi_f(G)\ge \frac{|V(G)|}{\alpha(G)}$ for any graph $G$, so we have   
		$$\chi_f(G_{n,m,k})\ge \frac{nk+m(k+1)}{k}=n+m+\frac{m}{k}.$$
		
		Since $\chi_f(G) \le ch^s_f(G) \le \chi^s_{f,p}(G)$ for any graph $G$, it suffices to show that $\chi^s_{f,P}(G_{n,m,k}) \le n+m+\frac{m}{k}$. For this purpose, we will show that for any $\frac{a}{b} \geq n+m+\frac{m}{k}$, $G_{n,m,k}$ is $(a,b)$-paintable.  In the following, we present a winning strategy for Painter in the $(a,b)$-painting game on $G_{n,m,k}$.
		
		For simplicity, we assume that if a vertex $v$ has been coloured with $b$ colours, then Lister
		will not choose $v$ in later moves.
		
		For $i=1,2, \cdots $, we denote by $U_i$ the set of vertices chosen by Lister, and by $X_i$ the independent set contained in $U_i$, coloured by Painter at the $i$th round. 
		
		Assume Lister has chosen $U_i$. We describe a strategy for Painter to choose the independent set $X_i$.
		
		We consider two cases.
		
		\medskip
		\noindent
		{\bf Case 1} $\partial(U_i) = \{0,1,2,\cdots, 2k\}$.
		
		Let $t_i = |\{j: j \le i \ \text{and } \partial(U_j) = \{0,1,\cdots, 2k\} \}|$. Let $\tau_i$ be the unique integer in $\{0, 1, \cdots, 2k\}$ which is congruent to $t_i$ modulo $2k+1$.
		Then $X_i$ is any independent set contained in $U_i$ with $\partial(X_i)= \{\tau_i, \tau_i+2,
		\cdots,	\tau_i+2k-2 \}$ (the summation in the indices are modulo $2k+1$).
		
		\medskip
		\noindent
		{\bf Case 2} $\partial(U_i) \neq \{0,1,\cdots, 2k\}$.

		Painter traverse the  sets $V_0, V_1, \ldots, V_{2k}$ of $G_{n,m,k}$ one by one in cyclic order  along the clockwise
		direction, starting at an arbitrary  set $V_s$ for which $s \notin \partial(U_i)$, and choose an independent set $X_i$ as follows: Initially, $X_i = \emptyset$ and vertices will be added to $X_i$ in the process.
		When we traverse to $V_j$, if $U_i \cap V_j \neq
		\emptyset$ and $X_i \cap V_{j-1} = \emptyset$,  then add a vertex from $V_j\cap U_i$ to $X_i$. Otherwise, do not add any vertex from $V_j$ into $X_i$
		(again the calculation in the indices are modulo $2k+1$).
		
		It follows from the definition  that in both cases, the set $X_i$ is an independent set of $G$ contained in $U_i$. We shall show that this is a winning strategy for Painter. First we have the following claim.
		
		\begin{claim}
			\label{giveup}
			Case 1 happens at most $\frac{mb(2k+1)}{k}$ times. 
		\end{claim}
		\begin{proof}
			Every $2k+1$ times Case 1 happens,  $k$ vertices (not necessarily distinct) in $V_0$ will be coloured. However,
			all the vertices from $V_0$ need to be coloured $mb$ times in total. Thus the claim holds.
		\end{proof}
		
		Assume to the contrary that at the end of some round, say at the end of the $r$th round,
		a vertex $v \in V_j$ has no token left and is coloured at most $b-1$ times.
		
		Let $B_r(v)=\{i \leq r: v \in U_{r} - X_{r}\}$, which is the collection of rounds of the game that $v$ is chosen by Lister  but not coloured by Painter at the end of $r$th round.
		Then $|B_r(v)| \geq a-b+1$. It follows from the strategy that for each $i \in B_r(v)$, one of the following   holds:
		
		\begin{itemize}
			\item   $V_{j-1} \cap X_i \neq \emptyset$, i.e.,  some vertex in $V_{j-1}$ is 
			coloured in this round.
			\item $\partial(U_{i}) = \{0,1,\cdots, 2k\}$ and $\tau_{i}=j+1$.  
			\item $V_{j} \cap X_{i} \neq \emptyset$ and $v \notin X_{i}$, i.e., some another vertex from $  V_j$ is cloured at this round.
		\end{itemize}
		
		Since    $|V_{j-1}\cup (V_{j}\setminus v)| \le m+n-1$ and each vertex in $V_{j-1}\cup (V_{j}\setminus v)$ is coloured at most $b$ times, we conclude that
		\begin{align}
			\label{eq-lex1}
			|\{i \leq r: \partial(U_{i}) & = \{0,1,\cdots, 2k\}, \tau_i= j+1| \nonumber  \\
			& \geq a-b+1-(m+n-1)b  \nonumber  \\
			& = a-(m+n)b+1. 
		\end{align}
		It follows from the definition of $\tau_{i}$ that 
		\begin{align}
			\label{eq-lex2}
			|\{i \leq r: \partial(U_{i}) = \{0,1,\cdots, 2k\}, \tau_{i}=j+1| \leq \lceil \frac{t_i}{2k+1} \rceil. 
		\end{align}
		By Claim \ref{giveup}, $t_i \leq \frac{(2k+1)mb}{k}$. Hence
		$$ \lceil \frac{t_{i}}{2k+1} \rceil  \leq \lceil \frac{(2k+1)mb}{k(2k+1)} \rceil = \lceil \frac{mb}{k} \rceil,$$ 
		Combining the inequality above   with Inequalities (\ref{eq-lex1}) and (\ref{eq-lex2}), we have
		$$\frac{mb}{k} > a-(m+n)b,$$ 
		that is  $$\frac{a}{b} < m+n+\frac{m}{k},$$ contrary to our assumption. 
	\end{proof}
	
	By setting $m=n=1$ in Theorem \ref{thm:n+m+m/q}, we have 
	\begin{corollary}
		\label{odd-cycle}
		$\chi_{f}(C_{2q+1})=ch^s_{f}(C_{2q+1})=\chi^s_{f,P}(C_{2q+1})=2+\frac{1}{q}$.
	\end{corollary}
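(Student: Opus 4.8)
The plan is to obtain Corollary \ref{odd-cycle} as an immediate specialization of Theorem \ref{thm:n+m+m/q}, taking $n=m=1$ and $k=q$. The only genuine content is to verify that with these parameters the constructed graph $G_{1,1,q}$ is (isomorphic to) the odd cycle $C_{2q+1}$ itself; once this identification is made, the equalities and the numerical value fall out of the theorem directly.

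First I would check that the hypotheses of Theorem \ref{thm:n+m+m/q} are met: it requires $n\ge m$, and $n=m=1$ certainly satisfies this. Next, I would unwind the definition of $G_{n,m,k}=C_{2k+1}[I:K_n,K_m]$ in this case. Setting $n=m=1$ makes both replacement graphs equal to the single-vertex graph $K_1$, so $H_1=H_2=K_1$. The remark following the definition of $G[S:H_1,H_2]$ states that when $H_1=H_2$ the construction collapses to the lexicographic product $G[H_1]$; hence $G_{1,1,q}=C_{2q+1}[K_1]$. Since forming the lexicographic product of any graph with $K_1$ replaces each vertex by a single vertex and preserves all adjacencies, $C_{2q+1}[K_1]\cong C_{2q+1}$. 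Thus $G_{1,1,q}\cong C_{2q+1}$.

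With this identification in hand, I would simply invoke Theorem \ref{thm:n+m+m/q} with $n=m=1$ and $k=q$, which gives
$$\chi_f(C_{2q+1})=ch^s_f(C_{2q+1})=\chi^s_{f,P}(C_{2q+1})=n+m+\frac{m}{k}=1+1+\frac{1}{q}=2+\frac{1}{q},$$
completing the argument. There is essentially no obstacle here; the only step requiring care is the identification $G_{1,1,q}\cong C_{2q+1}$, and that is immediate from the definition. As a consistency check, I note that the resulting value agrees with the classical fact $\chi_f(C_{2q+1})=\frac{2q+1}{q}=2+\frac{1}{q}$, which confirms that the specialization of the theorem is correct.
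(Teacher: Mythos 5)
Your proof is correct and takes exactly the paper's route: the paper derives Corollary \ref{odd-cycle} by setting $m=n=1$ (and $k=q$) in Theorem \ref{thm:n+m+m/q}, just as you do. Your explicit verification that $G_{1,1,q}=C_{2q+1}[K_1]\cong C_{2q+1}$ is a detail the paper leaves implicit, and it is accurate.
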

	
	\begin{proposition}
		\label{n+m}
		For any  positive integers $p, q$ with $p \geq 2q$ and $\frac{2p}{2q+1} \leq \lfloor \frac{p}{q}\rfloor$, there exists a graph $G$ such that  
		$$ch_{f}^s(G)=\chi^{s}_{f,p}(G)=\chi_f(G)=\frac{p}{q}.$$
	\end{proposition}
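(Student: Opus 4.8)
The plan is to derive this directly from Theorem \ref{thm:n+m+m/q} by choosing the parameters of $G_{n,m,k}$ so as to hit $p/q$, after first translating the hypothesis into a usable arithmetic form. Write $a=\lfloor p/q\rfloor$ and $c=p-aq$, so that $0\le c\le q-1$ and $p/q=a+\frac{c}{q}$. A one-line computation shows that the hypothesis $\frac{2p}{2q+1}\le a$ is equivalent to $2c\le a$: indeed $\frac{2p}{2q+1}\le a \iff 2(aq+c)\le a(2q+1)\iff 2c\le a$. Moreover $p\ge 2q$ gives $p/q\ge 2$ and hence $a\ge 2$. Thus the two hypotheses are exactly $a\ge 2$ and $a\ge 2c$, and these are all I will need.

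Assume first that $c\ge 1$. I take $G=G_{n,m,k}$ with $(n,m,k)=(a-c,\,c,\,q)$. These are positive integers with $n\ge m$: indeed $m=c\ge 1$, $k=q\ge 1$, and $n=a-c\ge 2c-c=c=m$ by the inequality $a\ge 2c$. Theorem \ref{thm:n+m+m/q} then applies and yields
\[
ch^s_f(G)=\chi^s_{f,P}(G)=\chi_f(G)=n+m+\frac{m}{k}=(a-c)+c+\frac{c}{q}=a+\frac{c}{q}=\frac{p}{q},
\]
as required.

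It remains to treat the integer case $c=0$, where $p/q=a\ge 2$. For $a\ge 3$ I again invoke Theorem \ref{thm:n+m+m/q}, now with $(n,m,k)=(a-2,\,1,\,1)$: here $n=a-2\ge 1=m$ and $k=1$, and $n+m+\frac{m}{k}=(a-2)+1+1=a$, so the same conclusion holds. (Since $C_3$ is a triangle, $G_{a-2,1,1}$ is simply $K_a$, so this is the expected statement $ch^s_f(K_a)=\chi^s_{f,P}(K_a)=\chi_f(K_a)=a$.) The single remaining value is $p/q=2$, forcing $a=2,\ c=0$; this the construction $G_{n,m,k}$ cannot produce, since $n+m+\frac{m}{k}>2$ whenever $n\ge m\ge 1$ and $k\ge 1$. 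For this value I take $G=K_2$ and check the equalities directly: $\chi_f(K_2)=2$, while a Hall/alternating argument gives $ch_k(K_2)=\chi_{k,P}(K_2)=2k$, so by Lemma \ref{lem-alternatedef} both $ch^s_f(K_2)$ and $\chi^s_{f,P}(K_2)$ equal $\sup_k\frac{2k-1}{k}=2$.

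In effect the argument is a dictionary: the substantive content lives entirely in Theorem \ref{thm:n+m+m/q}, and the only genuine work is the arithmetic reduction of the hypothesis to $a\ge 2c$ together with the choice of parameters $(a-c,c,q)$. The only point at which the lexicographic construction fails outright is the boundary value $p/q=2$, and I expect this sole degenerate case—disposed of by the trivial graph $K_2$—to be the one easily-overlooked step rather than a real obstacle.
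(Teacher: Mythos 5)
Your proposal is correct and follows essentially the same route as the paper: both proofs feed the parameters $(n,m,k)=\bigl(\lfloor p/q\rfloor-c,\,c,\,q\bigr)$, where $c=p-q\lfloor p/q\rfloor$, into Theorem \ref{thm:n+m+m/q}, after observing that the hypothesis $\frac{2p}{2q+1}\le\lfloor p/q\rfloor$ is precisely the condition $n\ge m$ (the paper writes these parameters as $a=\lfloor\frac{p}{q}\rfloor(q+1)-p$ and $b=p-q\lfloor\frac{p}{q}\rfloor$, which are the same numbers). The one genuine difference is your separate treatment of the case $c=0$, i.e.\ $q\mid p$, and here your write-up is in fact more careful than the paper's: the paper asserts that $p\ge 2q$ forces $a,b>0$, but $b=0$ whenever $q$ divides $p$, so Theorem \ref{thm:n+m+m/q}, which requires $n,m,k$ to be positive integers, does not literally apply in that case. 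Your patch — taking $G_{a-2,1,1}=K_a$ for integer ratios $a\ge 3$, and $K_2$ with the direct verification $ch_k(K_2)=\chi_{k,P}(K_2)=2k$ for ratio $2$ — closes exactly this gap, so your version is the complete one.
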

	\begin{proof}
		Let $a=\lfloor\frac{p}{q}\rfloor(q+1)-p$ and $b=p- q\lfloor\frac{p}{q}\rfloor$. As $p\geq 2q$, we first have that $a,b >0$. On the other hand, note that by the condition $\frac{2p}{2q+1} \leq \lfloor \frac{p}{q}\rfloor$, we have that
		\begin{align*} 
			a-b = (2q+1)\lfloor\frac{p}{q}\rfloor-2p \geq 0. 
		\end{align*}
		So by setting $a=n$ and $b=m$ in Theorem \ref{thm:n+m+m/q}, we have that 
		$$ch^s_{f}(G_{a,b,q})= \chi^s_{f,P}(G_{a,b,q})=a+b+\frac{b}{q}=\frac{p}{q}.$$
		
		This completes the proof.
	\end{proof}
	
	According to Proposition \ref{n+m}, if $q \leq2$, then for any $p \geq 2q$,  there exists a graph $G$ with $ch_f^s(G)=ch_{f,P}^s(G) = p/q$. If $q=3$, the only cases $p \geq 2q$ unknown are $p=8$ and $p=11$.

	\section{Relation among $\chi_f^s(G)$, $ch_f^s(G)$ and $   \chi_{f,P}(G)$}
	\label{sec:uppper-bound}
	
	It follows from the definitions that for any graph $G$,
	$$\chi_f^s(G) \le ch_f^s(G) \le \chi_{f,P}^s(G).$$
	The gap  $ch_f^s(G)-\chi_f^s(G)$ can be arbitrarily large, as for complete bipartite graphs, we have $\chi_f^s(K_{n,n}) =2$ and $ch_f^s(K_{n,n}) \ge ch(K_{n,n})-1 \ge \log_2 n -(2+o(1)) \log_2 \log_2 n $.
	
	In the following we show that the difference $\chi_{f,P}^s(K_{n,n}) -ch_f^s(K_{n,n})$  also goes to infinity with $n$. It was proved in \cite{DGK2016} that for $n \geq 2^{k+3}$, the graph $K_{n,n}$ is not $k$-paintable, so  $\chi_P(K_{n,n}) \ge \log_2 n - 4$. Therefore,  
	$\chi_{f,P}^s(K_{n,n}) \ge \chi_P(K_{n,n})-1 \geq  \log_2 n - 5$.
	So it suffices to show that $\log_2 n - \chi_f^s(K_{n,n})$ goes to infinity with $n$.
	
	A \emph{$k$-uniform hypergraph} $H=(V,E)$ consists of a vertex set $V$ and an edge set   $E$, where each edge $e \in E$  is a $k$-subset of $V$.  A proper $c$-colouring of $H$ is a mapping $\phi: V \mapsto \{1,2,\cdots, c\}$ such that no edge is monochromatic. Hypergraph $2$-colourability, which is an alternate formulation of list colouring of complete bipartite graphs, is a central problem in combinatorics that has been studied in many papers (see \cite{Erdos1963,Beck1978,RS2000}, etc.)  Corresponding to $b$-fold list colouring of complete bipartite graphs, 
	we  define  a  \emph{$b$-proper $2$-colouring} of a hypergraph $H$ as a mapping $\phi: V(H) \to \{1,2\}$ such that for each edge $e$ of $H$, for each $i \in \{1,2\}$, $|\phi^{-1}(i) \cap e| \ge b$, i.e.,   each edge contains at least $b$ vertices of each colour.   We say that \emph{$H$ is $b$-proper $2$-colourable} if $H$ has a  $b$-proper $2$-colouring. Let $m(k,b)$ denote the minimum possible number of edges of a $kb$-uniform hypergraph which is not $b$-proper $2$-colourable.

	\begin{lemma}
		\label{lem-hyp}
		Every $p$-uniform hypergraph with $m$ edges satisfying $m \sum_{i=0}^{b-1}\binom{p}{i}\frac{1}{2^{p-1}} < 1$ has a $b$-proper $2$-colouring. As a result,
		$$m(k,b) \geq \left(\sum_{i=0}^{b-1}\binom{ks}{i}\right)^{-1}2^{kb-1}.$$
	\end{lemma}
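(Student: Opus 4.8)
The plan is to use the first-moment probabilistic method, in the spirit of Erd\H{o}s's classical lower bound for ordinary hypergraph $2$-colourability. I would colour each vertex of $H$ independently and uniformly at random by a colour in $\{1,2\}$, obtaining a random map $\phi: V(H) \to \{1,2\}$, and show that with positive probability $\phi$ is a $b$-proper $2$-colouring. Call an edge $e$ \emph{bad} if it fails the defining condition, that is, if $|\phi^{-1}(1) \cap e| < b$ or $|\phi^{-1}(2) \cap e| < b$; then $\phi$ is a $b$-proper $2$-colouring exactly when no edge is bad.

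The key estimate is the probability that a fixed edge $e$ is bad. Since $|e| = p$ and the colours are assigned independently, the number of vertices of $e$ receiving colour $1$ is binomially distributed with parameters $p$ and $1/2$, so $\Pr[|\phi^{-1}(1) \cap e| < b] = \sum_{i=0}^{b-1}\binom{p}{i}2^{-p}$, and the same value holds for colour $2$ by symmetry. A union bound over the two colours then gives $\Pr[e \text{ is bad}] \le 2\sum_{i=0}^{b-1}\binom{p}{i}2^{-p} = \sum_{i=0}^{b-1}\binom{p}{i}2^{-(p-1)}$. This single step carries essentially all the content of the argument: recognising that being bad means having too few vertices of \emph{some} colour, and converting the two symmetric lower tails into the factor $2^{-(p-1)}$ rather than $2^{-p}$.

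A second union bound, now over the $m$ edges, yields $\Pr[\text{some edge is bad}] \le m\sum_{i=0}^{b-1}\binom{p}{i}2^{-(p-1)}$, which is strictly less than $1$ by the hypothesis. Hence with positive probability no edge is bad, so at least one colouring is a $b$-proper $2$-colouring, which proves the first assertion.

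For the stated bound on $m(k,b)$ I would specialise to $p = kb$, since $m(k,b)$ concerns $kb$-uniform hypergraphs, and argue by contrapositive: if a $kb$-uniform hypergraph is \emph{not} $b$-proper $2$-colourable, then by the first part its number of edges $m$ must violate the inequality, i.e. $m\sum_{i=0}^{b-1}\binom{kb}{i}2^{-(kb-1)} \ge 1$, which rearranges to $m \ge \left(\sum_{i=0}^{b-1}\binom{kb}{i}\right)^{-1}2^{kb-1}$. Minimising over all such hypergraphs gives the claimed lower bound on $m(k,b)$. I expect no genuine obstacle beyond the bad-edge probability computation; the only points requiring care are the symmetry step producing $2^{-(p-1)}$ and the substitution $p = kb$ in the corollary.
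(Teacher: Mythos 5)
Your proposal is correct and follows essentially the same argument as the paper: a uniformly random $2$-colouring, the bound $\Pr[e \text{ bad}] \le \sum_{i=0}^{b-1}\binom{p}{i}2^{-(p-1)}$ via the two symmetric lower tails, and a union bound over the $m$ edges, with the $m(k,b)$ bound following by setting $p=kb$ and taking the contrapositive (note the paper's displayed bound has a typo, $\binom{ks}{i}$ for $\binom{kb}{i}$, which you correctly resolved).
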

	\begin{proof}
		Let $H=(V,E)$ be a $p$-uniform hypergraph satisfying the condition. 
		
		Colour the vertices of $H$ randomly by two colours with equal probability. We say an edge $e$ is  \emph{bad} if one colour is used on less than $b$ vertices in $e$. For each edge $e$, let $A_e$ be the event that $e$ is bad. Then 
		$${\rm Pr}(A_e)=2\sum_{i=0}^{b-1}\binom{p}{i}\frac{1}{2^{p}}=\sum_{i=0}^{b-1}\binom{p}{i}\frac{1}{2^{p-1}}.$$ 
		Therefore,  $${\rm Pr}(\bigvee_{e \in E} A_e) \le \sum_{e \in E}{\rm Pr}(A_e) = m{\rm Pr}(A_e) < 1.$$
		So there   exists a colouring such that there is no bad edges. 
	\end{proof}
	
	\begin{lemma}
		\label{lem:chsf-upper bound}
		Let $G$ be a bipartite graph with $n$ vertices. When $n$ is big enough, the $b$-fold choice number $ch_b(G)$ satisfies the following, $$\frac{ch_b(G)}{b} < \frac{1}{b}\log_{2}n+(1-\frac{1}{b})\log_2\log_2n+O(1).$$
	\end{lemma}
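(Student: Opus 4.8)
The plan is to translate $b$-fold list colouring of the bipartite graph $G$ into the $b$-proper $2$-colourability of an auxiliary hypergraph, and then feed the resulting threshold into Lemma~\ref{lem-hyp}. Fix a bipartition $V(G)=A\cup B$ and an arbitrary $k$-list assignment $L$. First I would form the $k$-uniform hypergraph $H$ whose vertex set is $\bigcup_{v\in V(G)}L(v)$ and whose edges are the lists $L(v)$, $v\in V(G)$; thus $H$ is $k$-uniform and has at most $n$ edges. The \emph{key observation} is that a $b$-proper $2$-colouring $\phi\colon V(H)\to\{1,2\}$ of $H$ yields an $(L,b)$-colouring of $G$: for each $v\in A$ pick $b$ colours of $L(v)$ with $\phi$-value $1$, and for each $v\in B$ pick $b$ colours of $L(v)$ with $\phi$-value $2$ (both possible because every edge of $H$ meets each colour class in at least $b$ vertices). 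The colour sets used on $A$ and on $B$ are then disjoint, and $G$ has no edge inside $A$ or inside $B$, so this is a proper $b$-fold colouring. Consequently, if $H$ is $b$-proper $2$-colourable for every $k$-list assignment $L$, then $G$ is $(k,b)$-choosable and $ch_b(G)\le k$.

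Next I would apply Lemma~\ref{lem-hyp} with $p=k$ and $m=n$: the hypergraph $H$ is $b$-proper $2$-colourable whenever $n\sum_{i=0}^{b-1}\binom{k}{i}<2^{k-1}$. It therefore suffices to exhibit a value of $k$ satisfying this inequality and to bound it from above, since any such $k$ is an upper bound for $ch_b(G)$. Using the crude estimate $\sum_{i=0}^{b-1}\binom{k}{i}\le b\binom{k}{b-1}$ (valid once $k$ exceeds $2(b-1)$) and taking base-$2$ logarithms, the inequality is implied by $\log_2 n+(b-1)\log_2 k\le k-O(\log b)$.

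Finally I would solve this last inequality for $k$, treating $b$ as a fixed parameter and letting $n\to\infty$. Since the dominant term forces $k=(1+o(1))\log_2 n$, we have $\log_2 k=\log_2\log_2 n+o(1)$, so iterating the fixed point $k=\log_2 n+(b-1)\log_2 k+O(\log b)$ produces a threshold $k_0=\log_2 n+(b-1)\log_2\log_2 n+O(1)$ for which the inequality holds. Then $ch_b(G)\le k_0$, and dividing by $b$ yields $\frac{ch_b(G)}{b}\le\frac1b\log_2 n+(1-\frac1b)\log_2\log_2 n+O(1)$, which gives the stated bound after absorbing the gap into the constant. I expect the main obstacle to be exactly this last asymptotic step: bounding the partial binomial sum $\sum_{i=0}^{b-1}\binom{k}{i}$ cleanly and solving the transcendental inequality $\log_2 n+(b-1)\log_2 k\lesssim k$ to the stated precision while keeping the error uniform, in particular justifying the substitution $\log_2 k=\log_2\log_2 n+o(1)$ and absorbing the $b$-dependent lower-order contributions into the final $O(1)$.
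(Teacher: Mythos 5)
Your proposal is correct and follows essentially the same route as the paper: the identical list-to-hypergraph translation (edges $L(v)$, a $b$-proper $2$-colouring of $H$ giving an $(L,b)$-colouring of the bipartite graph), the same reduction to Lemma \ref{lem-hyp}, and then a choice of list size making the probabilistic condition hold. The only difference is in the final bookkeeping: where the paper verifies $n\sum_{i=0}^{b-1}\binom{kb}{i}2^{-(kb-1)}<1$ by case analysis on $b$ (explicit computation for $b=2,3$, and $\binom{n}{k}<(en/k)^k$ for $b\ge 4$), you use the uniform estimate $\sum_{i=0}^{b-1}\binom{k}{i}\le b\binom{k}{b-1}$ and solve the resulting inequality asymptotically, which handles all $b\ge 2$ at once.
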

	
	\begin{proof}
		Let  $k=\frac{1}{b}\log_{2}n+(1-\frac{1}{b})\log_2\log_2n+C$, for some constant $C$, we shall prove that $G$ is $(kb,b)$-choosable for any $b \geq 2$.
		
		Assume $G=(X \cup Y, E)$ be a bipartite graph with $X$ and $Y$ being the two parts, and $L$ is a list assignment of $G$ with $|L(v)|=kb$ for each $v \in V(G)$. We construct a $kb$-uniform hypergraph $H$ with $V(H)=\bigcup_{v \in V(G)}L(v)$, and $E(H)=\{L(v):v \in V(G)\}$. So $|E(H)|=|V(G)|=n$. 
		
		Observe that if $H$ has a $b$-proper $2$-colouring, then $G$ is $(L,m)$-colourable. Indeed, each vertex is either labeled with red or blue in the $b$-proper $2$-colouring of $H$. Then for each vertex $v \in V(G)$, we can choose $b$ colours with label red for $v$ if $v \in X$, and choose $b$ colours with label blue for it if $v \in Y$. 
		
		Now, it suffices to prove that $H$ satisfies the condition in Lemma \ref{lem-hyp}, so we only need to verify that 
		$$n \sum_{i=0}^{b-1}\binom{kb}{i}\frac{1}{2^{kb-1}} < 1.$$ 
		The case that $b=1$ was proved in  \cite{Erdos1963}.  
		If $b=2$, then we have $kb=\log_2n+\log_2\log_2n+2C$, so,
		\begin{align*}
			n\sum_{i=0}^{b-1}\binom{kb}{i}\frac{1}{2^{kb-1}} &=  n(\log_2n+\log_2\log_2n+2C+1)\frac{1}{2^{\log_2n+\log_2\log_2n+2C-1}} \\
			& =   (\log_2n+\log_2\log_2n+2C+1)\frac{1}{2^{\log_2\log_2n+2C-1}} \\
			& =  \frac{2^t+t+2C+1}{2^{t+2C-1}},
		\end{align*}
		where $t=\log_2\log_2n$. When $n$ is large enough and hence $t$ is large enough, and $C \ge 1$, we have $n \sum_{i=0}^{b-1}\binom{kb}{i}\frac{1}{2^{kb-1}} < 1.$
		Similarly, we can verify the case for $b=3$.
		
		Assume $b\geq 4$. Using the inequality $\binom{n}{k}< (\frac{en}{k})^k$, we have 
		\begin{align*}
			n\sum_{i=0}^{b-1}\binom{kb}{i}\frac{1}{2^{kb-1}} & \le nb\left(\frac{e(\log_2n+(b-1)\log_2\log_2n+bC)}{b-1}\right)^{b-1}\frac{1}{2^{\log_2n+(b-1)\log_2\log_2n+bC-1}} \\
			& =   b\left(\frac{e(\log_2n+(b-1)\log_2\log_2n+bC)}{b-1}\right)^{b-1}\frac{1}{2^{(b-1)\log_2\log_2n+bC-1}} \\
			& =   \frac{b}{2^{bC-1}}\left(\frac{e(\log_2n+(b-1)\log_2\log_2n+bC)}{(b-1)\log_2n}\right)^{b-1}.
		\end{align*}
		Again when $n$ is large enough, we have $n \sum_{i=0}^{b-1}\binom{kb}{i}\frac{1}{2^{kb-1}} < 1.$
		This finishes the proof of the lemma.
	\end{proof}
	
	In 2000, Radhakrishnan and Srinivasan \cite{RS2000} actually gave a better lower bound for $m(k,1)$, who showed that $m(k,1)=\Omega(2^k\sqrt{\frac{k}{\ln k}})$.  
	This implies that $\frac{ch_1(G)}{1}=ch(G) \leq \log_2n - (\frac 12- o(1)) \log_2 \log_2 n$ if $G$ is a complete bipartite graph with $n$ vertices. Combing this fact and Lemma \ref{lem:chsf-upper bound} and Lemma \ref{lem-alternatedef}, we have the following proposition.
	
	\begin{corollary}
		Let $G$ be a bipartite graph with $n$ vertices. When $n$ is big enough, 
		$$ch^s_f(G) \leq \log_2n - (\frac 12- o(1)) \log_2 \log_2 n.$$ Consequently, $\chi^s_{f,P}(K_{n,n})-ch^s_f(K_{n,n})$ can be arbitrarily large.
	\end{corollary}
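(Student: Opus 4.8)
The plan is to exploit the discrete characterisation $ch^s_f(G)=\sup\{\frac{ch_k(G)-1}{k}:k\in\mathbf{N}\}$ from Lemma~\ref{lem-alternatedef} and to show that this supremum is dictated entirely by the single term $k=1$. First I would isolate the $k=1$ term: the Radhakrishnan--Srinivasan bound $m(k,1)=\Omega(2^k\sqrt{k/\ln k})$ of \cite{RS2000}, transported through the hypergraph $2$-colouring reduction (the $n$ lists of the vertices of a bipartite $G$ form the $n$ edges of a $k$-uniform hypergraph, whose proper $2$-colourings yield $L$-colourings of $G$), gives $\frac{ch_1(G)-1}{1}=ch(G)-1\le \log_2 n-(\tfrac12-o(1))\log_2\log_2 n$. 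This is already the target bound, so it suffices to prove that no larger fold number can beat it.

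For $k\ge 2$ I would invoke Lemma~\ref{lem:chsf-upper bound}, which yields $\frac{ch_k(G)-1}{k}\le \frac{ch_k(G)}{k}<\frac1k\log_2 n+(1-\frac1k)\log_2\log_2 n+O(1)$. Since $\frac1k\le\frac12$ and $1-\frac1k<1$ for every $k\ge 2$, the right-hand side is at most $\frac12\log_2 n+\log_2\log_2 n+O(1)$, whose coefficient on $\log_2 n$ is only $\frac12$. For $n$ large this is strictly below the $k=1$ value $\log_2 n-(\tfrac12-o(1))\log_2\log_2 n$, because the difference of the two equals $\tfrac12\log_2 n$ minus lower-order terms. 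Hence $\sup\{\frac{ch_k(G)-1}{k}:k\in\mathbf{N}\}$ is controlled by the $k=1$ term, which establishes the first assertion $ch^s_f(G)\le \log_2 n-(\tfrac12-o(1))\log_2\log_2 n$.

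For the second assertion I would specialise to $G=K_{n,n}$, which has $2n$ vertices, so the first part gives $ch^s_f(K_{n,n})\le \log_2(2n)-(\tfrac12-o(1))\log_2\log_2(2n)=\log_2 n-(\tfrac12-o(1))\log_2\log_2 n+O(1)$. On the other hand, the lower bound already recorded before the statement, coming from the non-paintability result of \cite{DGK2016} together with the paint-number identity in Lemma~\ref{lem-alternatedef}, gives $\chi^s_{f,P}(K_{n,n})\ge\chi_P(K_{n,n})-1\ge\log_2 n-5$. Subtracting the two estimates, $\chi^s_{f,P}(K_{n,n})-ch^s_f(K_{n,n})\ge(\tfrac12-o(1))\log_2\log_2 n-O(1)$, which tends to infinity with $n$.

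The step I expect to be most delicate is making the domination in the second paragraph uniform across all $k\ge 2$ simultaneously, since Lemma~\ref{lem:chsf-upper bound} is phrased for a fixed fold number with $n\to\infty$, and its implicit $O(1)$ must not secretly depend on $k$. I would resolve this by treating genuinely large $k$ separately: because $\chi_f(G)=2$ for a bipartite graph with an edge, Lemma~\ref{lem-fracchi} provides a threshold beyond which $\frac{ch_k(G)}{k}\le\chi_f(G)+\epsilon=2+\epsilon$, a bounded quantity trivially below the $k=1$ term, while the remaining moderate range of $k$ contributes the $\frac12\log_2 n+O(\log_2\log_2 n)$ bound. Re-examining the estimate $\frac{b}{2^{bC-1}}\bigl(\frac{e(\log_2 n+(b-1)\log_2\log_2 n+bC)}{(b-1)\log_2 n}\bigr)^{b-1}$ in the proof of Lemma~\ref{lem:chsf-upper bound} shows that a single choice of constant works for all $k\ge 2$ once $n$ is large, so the three ranges $k=1$, moderate $k$, and large $k$ combine to confirm that the supremum is pinned at $k=1$.
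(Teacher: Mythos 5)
Your proposal is correct and follows essentially the same route as the paper: bound the $k=1$ term of $\sup_k\frac{ch_k(G)-1}{k}$ via the Radhakrishnan--Srinivasan estimate from \cite{RS2000}, dominate all $k\ge 2$ terms by Lemma \ref{lem:chsf-upper bound}, combine through Lemma \ref{lem-alternatedef}, and derive the second assertion from the non-paintability bound of \cite{DGK2016} giving $\chi^s_{f,P}(K_{n,n})\ge \log_2 n-5$. Your added care about the uniformity in $k$ of the $O(1)$ term (handling large $k$ via Lemma \ref{lem-fracchi} and checking the explicit estimate for moderate $k$) fills in a detail the paper leaves implicit, but it is a refinement of the same argument rather than a different approach.
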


	Although the gaps  $ch_f^s(G)-\chi_f^s(G)$ and $\chi_{f,P}(G)-ch_f^s(G)$ can  be arbitrarily large, there are also many graphs $G$ for which the equality 
	$ch_f^s(G)=\chi_f^s(G)$ and/or $\chi_{f,P}(G)=ch_f^s(G)$ hold.  Recall that a graph $G$ is called {\em chromatic-choosable} if $\chi(G)=ch(G)$. The study of chromatic choosable graphs attracted a lot of attention. The well-known list colouring conjecture asserts that line graphs are chromatic-choosable.  
	This conjecture remains largely open, however, it was  shown by Galvin \cite{Galvin1995} the the line graphs of  bipartite graphs are chromatic-choosable. 
	This result extends to strong fractional choice number and strong fractional paint number.
	
	\begin{theorem}
		If $G=L(H)$ is the line graph of a bipartite graph $H$, then $$\chi_f^s(G) = ch_f^s(G) = \chi_{f,P}^s(G) = \Delta(H).$$ 
	\end{theorem}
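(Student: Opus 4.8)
The plan is to prove the two inequalities $\chi_f^s(G) \ge \Delta(H)$ and $\chi_{f,P}^s(G) \le \Delta(H)$; since the excerpt already gives $\chi_f^s(G) \le ch_f^s(G) \le \chi_{f,P}^s(G)$, these two bounds squeeze all three invariants to $\Delta(H)$. For the lower bound I would use the general fact that $\chi_f^s(G') \ge \chi_f(G')$ for every graph $G'$: if $r < \chi_f(G')$, then for a pair $(a,b)$ with $r \le a/b < \chi_f(G')$ the graph $G'$ is not $(a,b)$-colourable (no pair below $\chi_f(G')$ is), so $G'$ is not strongly fractional $r$-colourable, whence $\chi_f^s(G') \ge \chi_f(G')$. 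It then remains to note $\chi_f(L(H)) = \Delta(H)$. This follows because a $k$-fold edge colouring of $H$ is exactly a proper edge colouring of the bipartite multigraph $kH$ obtained by replacing each edge by $k$ parallel copies, and by König's edge-colouring theorem $\chi_k(L(H)) = \chi'(kH) = \Delta(kH) = k\Delta(H)$; dividing by $k$ and taking limits gives $\chi_f(L(H)) = \Delta(H)$. Hence $\Delta(H) = \chi_f(L(H)) \le \chi_f^s(L(H))$.

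For the upper bound, by Lemma \ref{lem-alternatedef} it suffices to show $\chi_{k,P}(L(H)) \le k\Delta(H)$ for every $k$, since then $\chi_{f,P}^s(L(H)) = \sup_k \frac{\chi_{k,P}(L(H))-1}{k} \le \sup_k \left(\Delta(H) - \tfrac1k\right) = \Delta(H)$. I would establish this through two reductions. The first is a blow-up comparison valid for every graph $G'$ and every $b$, namely $\chi_{b,P}(G') \le \chi_P\big(G'[K_b]\big)$, where $G'[K_b]$ denotes the lexicographic product in which each vertex is blown up into a clique $K_b$. Writing $n = \chi_P(G'[K_b])$, Painter wins the $(n,b)$-painting game on $G'$ by running an auxiliary $n$-painting game on $G'[K_b]$ under a winning Painter strategy $\sigma$: each time the real Lister charges a vertex $v$, Painter charges \emph{all} currently uncoloured copies $v^1,\dots,v^b$ of $v$ at once, so the token count of every live copy stays equal to that of $v$; when $\sigma$ colours an independent set $I'$ of $G'[K_b]$, this set contains at most one copy of each vertex and projects to an independent set of $G'$, which Painter colours. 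Since the $b$ copies of $v$ are coloured in distinct rounds and each copy is coloured before its (hence $v$'s) tokens run out, $v$ receives its $b$ colours within its budget.

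The second reduction is the identity $L(H)[K_k] = L(kH)$: the $k$ clique-copies replacing a vertex $e$ of $L(H)$ are precisely the $k$ parallel copies of the edge $e$ in $kH$, and two copies are adjacent in $L(H)[K_k]$ exactly when the underlying edges coincide or share an endpoint, i.e.\ exactly when the parallel edges are adjacent in $kH$. Combining the two reductions, and using that $kH$ is a bipartite multigraph with $\Delta(kH) = k\Delta(H)$, I obtain $\chi_{k,P}(L(H)) \le \chi_P\big(L(kH)\big) = \Delta(kH) = k\Delta(H)$, where the middle equality is the online (paintability) strengthening of Galvin's theorem applied to $kH$. This gives $\chi_{f,P}^s(L(H)) \le \Delta(H)$ and completes the squeeze.

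The main obstacle is this last ingredient, the online Galvin theorem $\chi_P(L(M)) = \Delta(M)$ for a bipartite multigraph $M$, which I would prove by the kernel method. Starting from a proper $\Delta(M)$-edge-colouring (König), orient $L(M)$ so that at a shared endpoint in one part of $M$ the arcs run from smaller to larger colour and at a shared endpoint in the other part from larger to smaller; a short count shows every out-degree is at most $\Delta(M)-1$, and the Gale–Shapley stable-matching argument shows that every induced subdigraph has a kernel. Painter then wins the $\Delta(M)$-painting game by colouring, in each round, a kernel of the subdigraph induced on the presented set, the out-degree bound ensuring that no vertex is exhausted of tokens before being coloured. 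The delicate points to get right are the token-synchronisation in the blow-up simulation (all copies of a vertex must be charged together, not one at a time, or the accounting fails) and the verification that the Galvin orientation is kernel-perfect; once these are set up correctly the stable-matching/kernel step is the only substantive part.
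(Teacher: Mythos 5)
Your proof is correct, but your handling of the upper bound takes a genuinely different route from the paper's. Both arguments rest on the same engine --- Galvin's kernel-perfect orientation of the line graph, with out-degree at most $\Delta(H)-1$, and a Painter strategy that colours a kernel of each presented set --- but they differ in how the $b$-fold aspect is absorbed. The paper proves a \emph{weighted} kernel lemma directly on $L(H)$: for any kernel-perfect orientation $D$ and any functions $f,g$ with $f(v) \ge \sum_{u \in N_D^+[v]} g(u)$, Painter wins the $(f,g)$-painting game (a one-step induction on $\sum_{v} f(v)$); taking $g \equiv m$ and $f \equiv \Delta(H)m$ then gives $(\Delta(H)m,m)$-paintability without ever leaving the graph $L(H)$ or invoking multigraphs. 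You instead stay in the $1$-fold world and push the multiplicity into the graph: the blow-up simulation $\chi_{b,P}(G') \le \chi_P(G'[K_b])$, the identity $L(H)[K_b] \cong L(bH)$, and the online Galvin theorem $\chi_P(L(M)) = \Delta(M)$ for bipartite multigraphs (Schauz's theorem, which you re-prove by the kernel method). Your decomposition is more modular --- the blow-up lemma is a clean, reusable reduction, and the online Galvin theorem is a known citable result --- but it forces you to treat multigraphs even when $H$ is simple (since $kH$ never is), including the digons that parallel edges create in Galvin's orientation, which your stable-matching step must and does tolerate; the paper's weighted lemma sidesteps all of this. Your token-synchronisation argument in the simulation is sound, and you correctly flag its one delicate point (all uncoloured copies of a vertex must be charged simultaneously, so that every live copy's token count tracks that of the original vertex). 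Two minor remarks: your lower bound via K\"onig applied to $kH$ is correct but heavier than needed, since $\chi_f^s(G) \ge \chi_f(G) \ge \omega(G) = \Delta(H)$ is immediate from the clique formed by the edges at a maximum-degree vertex of $H$ (this is essentially the paper's one-line argument); and where the paper writes $\Delta^+(D) = \Delta(H)$ for Galvin's orientation, the bound actually used there (and the one your out-degree count correctly states) is $d_D^+(v) \le \Delta(H)-1$.
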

	\begin{proof}
		It is well-known that $\chi(G) = \omega(G) =\Delta(H)$. Hence $\chi_f^s(G) \geq  \Delta(H)$. It remains to show that $\chi_{f, P}(G) \leq \Delta(H)$. 
		
		An orientation $D$ of $G$ is {\em kernel perfect} if   any subset $X$ of $V(D)$ contains an independent set $I$ such that for any $v \in X-I$, $N_D^+(v) \cap I \ne \emptyset$. Here $N_D^+(v)$ is the set of out-neighbours of $v$. We set $N_D^+[v] = N_D^+(v) \cup \{v\}$. 
		It was proved in \cite{Galvin1995} that $G$ has a kernel perfect orientation $D$ with $\Delta^+(D) = \Delta(H)$. 
		On the other hand, for any kernel perfect orientation $D$ of $G$, for any $f,g: V(D) \to \mathbb{N}$, if $f(v) \ge \sum_{u \in N_D^+[v]}g(u)$ for every vertex $v$, then it is easy to show by induction on $\sum_{v \in V(D)}f(v)$ that Painter has a winning strategy in the $(f,g)$-painting game.  
		
		Indeed, if Lister choose a subset $X$ of $V(G)$ in a round, then Painter chooses an independent set $I$ of $X$ for which $N_D^+(v) \cap I \ne \emptyset$ for all $v \in X-I$. Let 
		\[
		f'(v) = \begin{cases} f(v)-1, &\text{ if $v \in X-I$}, \cr
			f(v), &\text{otherwise.,}
		\end{cases}
		\]
		and 
		\[
		g'(v) = \begin{cases} g(v)-1, &\text{ if $v \in I$}, \cr
			g(v), &\text{otherwise.}
		\end{cases}
		\]
		
		It follows from the definition  
		that for any vertex $v$, we still have 
		$f'(v) \ge \sum_{u \in N_D^+[v]}'(u)$.
		By induction hypothesis, Painter has a winning strategy in the $(f',g')$-painting game on $G$. Therefore Painter has a winning strategy for the $(f,g)$-painting game on $G$. 
		
		For any positive integer $m$, by letting  $f(v) =m (d_D^+(v) +1) \le \Delta(H)m$ and $g(v) = m$ for each vertex $v$, we have that $G$ is $(\Delta(H)m, m)$-paintable. Hence $\chi_{f,P}^s(G) \le \Delta(H)$. 		 	
	\end{proof}
	
	\section{$ch^s_f(G)$ for planar graphs}
	\label{section-planar-lower-bound}
	
	In this section, we  study the strong fractional choice number of planar graphs. Let $\mathcal{P}$ denote the class of planar graphs and for integers $k_1,\ldots, k_q \ge 3$, let $\mathcal{P}_{k_1,\ldots, k_q}$ denote the class of planar graphs without $k_i$-cycles for $i=1,\ldots, q$. For example, $\mathcal{P}_{3,4,5}$ denotes planar graphs with girth $6$.
	
	It was shown in \cite{Zhu2017} that $4 +\frac{2}{9} \leq ch^s_f(\mathcal{P}) \leq 5$. The following result improves the lower bound. 
	
	\begin{proposition}
		\label{pro-planar}
		For each positive integer $m$, there is a planar graph $G$ which is not $(4m+
		\lfloor\frac{m-1}{3}\rfloor,m)$-choosable. Consequently, $ch^s_f(\mathcal{P}) \geq 4+\frac{1}{3}$.
	\end{proposition}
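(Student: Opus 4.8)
The \emph{consequently} clause is the easy half, and I would dispatch it first using the reformulation in Lemma \ref{lem-alternatedef}. If $G$ is a planar graph that is not $(4m+\lfloor\frac{m-1}{3}\rfloor,m)$-choosable, then by definition $ch_m(G)\ge 4m+\lfloor\frac{m-1}{3}\rfloor+1$, and hence
$$
ch^s_f(\mathcal{P})\ \ge\ ch^s_f(G)\ \ge\ \frac{ch_m(G)-1}{m}\ \ge\ 4+\frac{\lfloor\frac{m-1}{3}\rfloor}{m}.
$$
Letting $m$ run through the values $m\equiv 1\pmod 3$ and tending to infinity, the right-hand side tends to $4+\frac13$. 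So the whole mathematical content lies in producing, for each $m$, a planar graph $G=G_m$ together with a witnessing list assignment, and that is where I would concentrate.

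For the construction I would build $G_m$ from \emph{colour-filtering gadgets} attached to a small core, so that the base case $m=1$ recovers a Voigt-type non-$4$-choosable planar graph and the general case degrades gracefully. Concretely, I would seek a planar gadget with a single root (boundary) vertex $r$ and an interior $(4m+\lfloor\frac{m-1}{3}\rfloor)$-list assignment $L$ such that in every $(L,m)$-colouring the $m$ colours used at $r$ are forced into, or forced to avoid, a prescribed palette. Planarity caps the number of such gadgets that can be made to bound a single core vertex, and it is this cap that should be responsible for the leading constant $4$. The crucial feature is to organise the colours inside each gadget into triples, so that each time $m$ grows by $3$ one extra colour can be economised at the root; this is the mechanism I expect to produce the floor term $\lfloor\frac{m-1}{3}\rfloor$ and hence beat the weaker $\frac{2}{9}m$ saving of \cite{Zhu2017}.

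I would then arrange the forced palettes so that the constraints delivered to the core are jointly unsatisfiable, e.g. by placing a few mutually adjacent (or odd-cycle-linked) core vertices whose forced palettes overlap too heavily to admit pairwise-disjoint $m$-subsets. The final step is a counting argument: summing the forced/forbidden palette sizes against the uniform list size $4m+\lfloor\frac{m-1}{3}\rfloor$ should show that the number of \emph{distinct} colours demanded strictly exceeds the number available, so that no $(L,m)$-colouring can exist. I would try to make this a single clean inequality, valid uniformly in $m$, rather than a case analysis.

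The main obstacle is the simultaneous optimisation hidden in the gadget: the filters must be planar, of bounded "contact" with the core, and colour-efficient enough that the aggregate saving is exactly $\lfloor\frac{m-1}{3}\rfloor$ and not less. Verifying that the assembled constraints are genuinely unsatisfiable for \emph{every} admissible choice of $m$ colours at each vertex — rather than merely for a convenient one — is the delicate part, and I expect it to force careful bookkeeping of the floor function across the residues of $m$ modulo $3$. Keeping the whole graph planar while still attaching enough gadgets to each core vertex to reach the bound is the second place where the argument is most likely to need care.
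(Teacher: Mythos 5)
Your treatment of the \emph{consequently} clause is correct, and it is exactly the routine half: once a planar graph that is not $(4m+\lfloor\frac{m-1}{3}\rfloor,m)$-choosable exists for every $m$, Lemma \ref{lem-alternatedef} (or the direct computation you give) yields $ch^s_f(\mathcal{P})\ge 4+\frac{1}{3}$ by letting $m\to\infty$. The paper takes this step for granted in the same way.

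The genuine gap is that everything after that is a research plan, not a proof. You never exhibit a gadget, never write down a list assignment, and never derive the inequality showing that no $(L,m)$-colouring exists; you only enumerate properties you would \emph{want} a gadget to have (single-root ``colour filters'', colours organised in triples, a clean counting inequality). The two points you yourself flag as delicate --- ruling out \emph{every} admissible choice of $m$ colours at each vertex, and keeping the assembled graph planar --- are precisely the points that carry the content, and your outline does not resolve either. For comparison, the paper's proof is quite different in structure from your sketch: it uses a single $9$-vertex planar gadget $T$ (Fig. \ref{T-planar graph}) with \emph{two} special vertices $u,v$ whose lists are pairwise disjoint $m$-sets $A$ and $B$, and interior lists built from further disjoint sets $C,D,E,F$ with $|C|=|D|=m$, $|F|=2m$, $|E|=\lfloor\frac{m-1}{3}\rfloor$. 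Two triangles $u_1v_1x$ and $u_2v_2y$ force almost all of $C$ and $D$ to be consumed by neighbours of the central vertex $z$, so $z$ must draw its $m$ colours from three reservoirs each capped at $\lfloor\frac{m-1}{3}\rfloor$, giving $m\le 3\lfloor\frac{m-1}{3}\rfloor<m$. The universal quantifier over colour choices at $u,v$ is then handled not inside the gadget but by amplification: one disjoint copy $T_{A,B}$ for \emph{each} pair of $m$-subsets $(A,B)$ of two fixed lists $X,Y$, with all copies of $u$ identified and all copies of $v$ identified --- planarity survives because the copies share only these two vertices. Note also that the denominator $3$ arises from $z$ having three capped colour reservoirs ($C$, $D$, $E$), not from grouping colours into triples, and the leading constant $4$ comes from the gadget's list structure rather than from any planarity cap on the number of gadgets meeting a core vertex; so even the heuristics guiding your intended construction point somewhat away from an argument that is known to work. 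As it stands, the proposal establishes only the easy implication and leaves the proposition itself unproved.
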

	
	\begin{proof}
		Let $T$ be the graph  as shown in Fig. \ref{T-planar graph}, $\epsilon$ be a real number such that $\epsilon m = \lfloor\frac{m-1}{3}\rfloor$. 
		Assume $A,B,C,D,E,F$ are pairwise disjoint sets of colours with $|A|=|B|=|C|=|D|=m$, $|E|=\epsilon m$ and $|F|=2m$.
		\begin{itemize}
			\item $L(u)=A$ and $L(v)=B$.
			\item $L(x) = L(y) = A\cup B \cup F \cup E$.
			\item $L(u_1) =  A\cup C \cup F \cup E$ and $L(v_1) = B \cup C \cup F \cup E$.
			\item $L(u_2) = A \cup D \cup F \cup E$. and $L(v_2) = B \cup D \cup F \cup E$.
			\item $L(z)=A \cup B \cup C \cup D \cup E$.
		\end{itemize}   
		\begin{figure}[H]
			\centering 
			\begin{tikzpicture}[>=latex,	
				roundnode/.style={circle, draw=black,fill= red, minimum size=1mm, inner sep=0pt}]  
				\node [roundnode] (u) at (0,2){}; 
				\node [roundnode] (z) at (0,0){};
				\node [roundnode] (u2) at (1.2,1.6/3){};	
				\node [roundnode] (u1) at (-1.2,1.6/3){};
				\node [roundnode] (v2) at (1.2,-1.6/3){};	
				\node [roundnode] (v1) at (-1.2,-1.6/3){};
				\node [roundnode] (v) at (0,-2){};
				\node [roundnode] (x) at (-8/3,0){};
				\node [roundnode] (y) at (8/3,0){};	
				
				\node at (0,2.2){$u$};
				\node at (0,-2.2){$v$};
				
				\node at (-0.85, -0.6){$v_1$}; 
				\node at (0.85, -0.6){$v_2$}; 
				\node at (-0.85,0.55){$u_1$};
				\node at (0.85, 0.6){$u_2$}; 
				
				\node at (0.15, 0.2){$z$}; 
				
				\node at (-3, 0){$x$}; 
				\node at (3, 0){$y$}; 
				
				\draw  (u)--(x)--(u1)--(z)--(u2)--(y)--(u)--(u2);
				\draw  (v1)--(u1)--(u)--(z);
				\draw  (u)--(z)--(v);
				\draw  (v)--(x)--(v1)--(z)--(v)--(v2)--(u2);
				\draw  (v1)--(v)--(y)--(v2); 
				\draw  (z)--(v2);
			\end{tikzpicture}
			\caption{The target graph $T$}
			\label{T-planar graph}
		\end{figure}
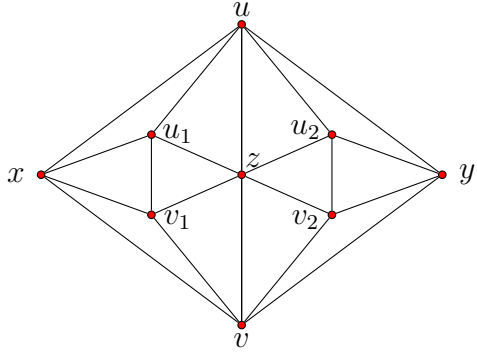
		
		Now we show that there is no $m$-fold $L$-colouring of $G$. Suppose to the contrary, $\phi$ is an $m$-fold $L$-colouring of $G$. Then $\phi(u)=A$ and $\phi(v)=B$. Note that $u_1v_1x$ is a clique, so each colour in $E \cup F$ can be used at most once in $u_1,v_1$ and $x$. As altogether, we use $3m$ distinct colours in these three vertices, at least $(1-\epsilon)m$ colours in $C$ are used on vertex $u_1$ and $v_1$, which implies that at most $\epsilon m$ colours in $C$ can be used at vertex $z$. By symmetric, at most $\epsilon m$ colours in $D$ can be used at vertex $z$. Recall that $|E|=\epsilon m$, so for the vertex $z$, 
		$$m=|\phi(z)|= |\phi(z) \cap C|+|\phi(z) \cap D| +|\phi(z)\cap E| \leq  3\epsilon m < m,$$
		a contradiction.

		Let $p = {(3+\epsilon)m \choose m}^2$. Let $G$ be obtained from the disjoint union of $p$ copies of $T$, by identifying all the copies of $u$ into a single vertex, also named $u$, and identifying all the copies of $v$ into a single vertex named $v$. 
		Let $L$ be the $(3+\epsilon)m$-list assignment of $G$ defined as follows: Let $L(u)=X$ and $L(v)=Y$, where  $X,Y$ are two disjoint set of size $(3+\epsilon)m$. For each pair of $m$-sets $(A,B)$, where $A \subseteq X$ and $B \subseteq Y$, we associate a copy of $T_{A,B}$ of $T$ so that the lists of the vertices of this copy of $T_{A,B}$ is as given above. Then $G$ is not $m$-fold $L$-colourable, for otherwise, $u$ is coloured with an $m$-subset $A$ of $X$, $v$ is coloured with an $m$-subset $B$ of $Y$. However, by the argument above,    $T_{A,B}$ has no $m$-fold $L$-colouring. 
	\end{proof}

	Next we consider the family $\mathcal{P}_{4}$. 
	
	\begin{proposition}
		\label{pro-planar-c4-free}
		For each positive integer $m$, there is a planar graph $G$ without $4$-cycle,  which is not $(3m+\lfloor\frac{m-1}{2}\rfloor,m)$-choosable. Consequently,  $3+\frac{1}{2} \leq ch^s_f(\mathcal{P}_{4}) \leq 4$.
	\end{proposition}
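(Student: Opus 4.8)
The plan is to follow the two-stage strategy of Proposition \ref{pro-planar}: first build a small ``target'' graph $T$ that is planar and $C_4$-free, together with a list assignment of width $(3+\epsilon)m$ where $\epsilon m=\lfloor\frac{m-1}{2}\rfloor$, under which $T$ has no $m$-fold colouring once its two terminals $u,v$ receive prescribed colours; then amplify $T$ by a product construction ranging over all colourings of $u$ and $v$. The one numerical fact driving everything is that $\epsilon m=\lfloor\frac{m-1}{2}\rfloor$ satisfies $2\epsilon m\le m-1<m$.

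The genuinely new difficulty is that the gadget of Proposition \ref{pro-planar} cannot be reused: there the vertex $x$ is adjacent to both $u$ and $v$, and the ``bad'' vertex $z$ is adjacent to two vertices $u_1,v_1$ of a common triangle, and each of these configurations yields a $4$-cycle (two vertices with two common neighbours). Since a $C_4$-free graph has clique number at most $3$ and any two vertices share at most one common neighbour, I must force colours through triangles that meet each terminal in only one vertex, and I must locate the contradiction on an edge rather than on a vertex possessing two triangle-neighbours. This redesign is the crux of the proof.

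Concretely, I would take two vertex-disjoint triangles $\{w_1,s_1,t_1\}$ and $\{w_2,s_2,t_2\}$, add the single edge $w_1w_2$, and join $u$ to $s_1,s_2$ and $v$ to $t_1,t_2$. With pairwise disjoint colour classes $A,B,C$ of size $m$ and a pool $G$ of size $(2+\epsilon)m$ disjoint from them, set $L(u)=A$, $L(v)=B$, $L(s_i)=A\cup G$, $L(t_i)=B\cup G$, and $L(w_i)=C\cup G$, so that every non-terminal list has width exactly $(3+\epsilon)m$. The verification I expect to carry out is this: once $\phi(u)=A$ and $\phi(v)=B$, adjacency forces $\phi(s_i),\phi(t_i)\subseteq G$; in each triangle these two $m$-sets are disjoint and occupy $2m$ of the $(2+\epsilon)m$ colours of $G$, leaving $w_i$ at most $\epsilon m$ colours of $G$, so $|\phi(w_i)\cap C|\ge(1-\epsilon)m$. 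Because $w_1w_2$ is an edge, $\phi(w_1)\cap C$ and $\phi(w_2)\cap C$ are disjoint subsets of $C$, whence $m=|C|\ge 2(1-\epsilon)m>m$, a contradiction. A routine check of all pairs of vertices confirms that no two of them share two common neighbours, so $T$ is $C_4$-free, and $T$ is visibly planar with $u,v$ on its outer face.

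Finally I would amplify exactly as in Proposition \ref{pro-planar}: take $\binom{(3+\epsilon)m}{m}^2$ copies of $T$ with fresh colours for $C$ and $G$ in each copy, identify all copies of $u$ into one vertex with list $X$ of width $(3+\epsilon)m$ and all copies of $v$ into one vertex with list $Y$, and attach to each pair of $m$-subsets $A\subseteq X,\,B\subseteq Y$ the copy whose classes are $A,B$. Any $m$-fold colouring colours $u$ with some $A^\ast\subseteq X$ and $v$ with some $B^\ast\subseteq Y$, and the copy indexed by $(A^\ast,B^\ast)$ then has no valid colouring by the previous paragraph; all lists now have width $(3+\epsilon)m=3m+\lfloor\frac{m-1}{2}\rfloor$. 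Since each copy is planar with $u,v$ on its outer face, the copies lay out in parallel between $u$ and $v$ while preserving planarity and $C_4$-freeness, so the resulting $G\in\mathcal{P}_4$ is not $(3m+\lfloor\frac{m-1}{2}\rfloor,m)$-choosable. Lemma \ref{lem-alternatedef} then gives $ch^s_f(\mathcal{P}_4)\ge 3+\frac{\lfloor(m-1)/2\rfloor}{m}$ for every $m$, hence $ch^s_f(\mathcal{P}_4)\ge 3+\frac12$; the matching upper bound $ch^s_f(\mathcal{P}_4)\le 4$ I would obtain from the known $4$-choosability of $C_4$-free planar graphs in its multifold form, which bounds $ch_k(G)$ linearly in $k$ and feeds into Lemma \ref{lem-alternatedef}.
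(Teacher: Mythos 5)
Your proposal is correct and is essentially identical to the paper's own proof: your gadget (two disjoint triangles joined by the edge $w_1w_2$, with $u$ attached to $s_1,s_2$ and $v$ to $t_1,t_2$) is isomorphic to the paper's target graph $T$ in Fig.~\ref{T4-planar graph} under $s_i\mapsto u_i$, $t_i\mapsto v_i$, $w_1\mapsto x$, $w_2\mapsto y$, and your list assignment matches the paper's under $G\leftrightarrow C\cup E$, $C\leftrightarrow D$, with the same counting contradiction (each $w_i$ forced to take at least $(1-\epsilon)m$ colours from a shared $m$-set across the edge $w_1w_2$) and the same amplification and upper bound via \cite{LSX2001}.
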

	\begin{proof}
		Let $T$ be labeled as shown in Fig.\ref{T4-planar graph}, $\epsilon$ be a real number such that $\epsilon m = \lfloor\frac{m-1}{2}\rfloor$. For any disjoint sets $A$ and $B$, we define a list assignment $L_{A,B}$ (when $A,B$ are clear, and there is no confusion, we write $L$ in short) of $T$ as follows: 
		Assume $A,B,C,D,E$ are pairwise disjoint sets of colours with $|A|=|B|=|D|=m$, $|C|=2m$ and $|E|=\epsilon m$.
		\begin{itemize}
			\item $L(u)=A$ and $L(v)=B$.
			\item $L(u_1) = L(u_2) = A\cup C \cup E$.
			\item $L(v_1) = L(v_2) = B \cup C \cup E$.  
			\item $L(x) = L(y) = C \cup D \cup E$.
		\end{itemize}   
		
		\begin{figure}[H]
			\centering 
			\begin{tikzpicture}[>=latex,	
				roundnode/.style={circle, draw=black,fill= red, minimum size=1mm, inner sep=0pt}]  
				\node [roundnode] (u) at (0,2){}; 
				\node [roundnode] (y) at (0.4,0){};
				\node [roundnode] (u2) at (1.25,0.6){};	
				\node [roundnode] (u1) at (-1.25,0.6){};
				\node [roundnode] (v2) at (1.25,-0.6){};	
				\node [roundnode] (v1) at (-1.25,-0.6){};
				\node [roundnode] (v) at (0,-2){};
				\node [roundnode] (x) at (-0.4,0){}; 
				\node at (0,2.2){$u$};
				\node at (0,-2.2){$v$};	
				\node at (-1.6, -0.6){$v_1$}; 
				\node at (1.6, -0.6){$v_2$}; 
				\node at (-1.6,0.6){$u_1$};
				\node at (1.6, 0.6){$u_2$}; 
				\node at (-0.35, 0.25){$x$}; 		
				\node at (0.35, 0.25){$y$}; 	
				\draw  (u)--(u1)--(x)--(y)--(u2)--(u);
				\draw (x)--(v1)--(v)--(v2)--(y); 
				\draw  (u1)--(v1);
				\draw  (u2)--(v2);
			\end{tikzpicture}
			\caption{For $\mathcal{P}_{4}$}
			\label{T4-planar graph}
		\end{figure}
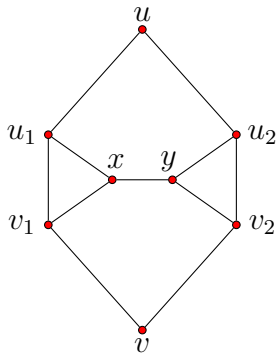
		
		By the same argument as in the proof of Theorem \ref{T-planar graph}, 
		it suffices to show that there is no $m$-fold $L$-colouring of $G$. Suppose to the contrary, $\phi$ is an $m$-fold $L$-colouring of $G$. Then $\phi(u)=A$ and $\phi(v)=B$. Note that $u_1v_1x$ is a clique, and we use $2m$ colours in $C\cup E$ on $u_1$ and $v_1$. Therefore, only $\epsilon m$ colours in $C \cup E$ can be used at $u_1$. By symmetric, only $\epsilon m$ colours in $C \cup E$ can be used at $y$. Note that $|D|=m$, so we have  
		$$2m=|\phi(x)|+|\phi(y)| \leq 2\epsilon m+|D|= 2\epsilon m+ m < 2m,$$
		a contradiction.
		
		It was proved in \cite{LSX2001} that  planar graphs without $4$-cycles are $(4m,m)$-choosable for any positive integer $m$. So $ch_f^s(\mathcal{P}_4) \le 4$. 
	\end{proof}
	
	Observe that $K_4$ is a planar graph without $k$-cycle for any $k \geq 5$, and $ch^s_f(K_4)=4$.  On the other hand,  it was shown in \cite{LSX2001} that every graph without $k$-cycle is $(4m,m)$-choosable, where $k \in \{4,5,6\}$. We have the following.
	
	\begin{observation}
		For any $k \geq 5$, $ch^s_f(\mathcal{P}_{k}) \geq 4$. In particular,
		$ch^s_f(\mathcal{P}_k) = 4$ when 
		$k \in \{5,6\}$.
	\end{observation}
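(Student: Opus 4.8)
The plan is to treat the two assertions separately — the lower bound $ch^s_f(\mathcal{P}_k) \ge 4$ valid for every $k \ge 5$, and the matching upper bound in the special cases $k \in \{5,6\}$ — and to derive each directly from facts already assembled just before the statement. For the lower bound, I would first record the containment $K_4 \in \mathcal{P}_k$ for every $k \ge 5$: the only cycle lengths occurring in $K_4$ are $3$ and $4$, so $K_4$ has no $k$-cycle once $k \ge 5$, and it is trivially planar. Since the family parameter is defined by $ch^s_f(\mathcal{P}_k) = \sup\{ch^s_f(G) : G \in \mathcal{P}_k\}$, this membership gives $ch^s_f(\mathcal{P}_k) \ge ch^s_f(K_4)$, and invoking the stated value $ch^s_f(K_4) = 4$ finishes the lower bound for all $k \ge 5$.

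For the upper bound when $k \in \{5,6\}$, I would start from the result of \cite{LSX2001}, quoted above, that every planar graph without a $k$-cycle is $(4m,m)$-choosable for each positive integer $m$. Fix any $G \in \mathcal{P}_k$. Being $(4m,m)$-choosable for every $m$ means $ch_m(G) \le 4m$ for all $m \in \mathbb{N}$, hence $\frac{ch_m(G)-1}{m} \le \frac{4m-1}{m} = 4 - \frac{1}{m} < 4$. Applying the alternate formula of Lemma \ref{lem-alternatedef} gives $ch^s_f(G) = \sup_m \frac{ch_m(G)-1}{m} \le 4$. As this holds for each $G \in \mathcal{P}_k$, taking the supremum over the family yields $ch^s_f(\mathcal{P}_k) \le 4$.

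Combining the two bounds gives $ch^s_f(\mathcal{P}_k) = 4$ for $k \in \{5,6\}$, while the lower bound alone gives $ch^s_f(\mathcal{P}_k) \ge 4$ for all $k \ge 5$. There is no real obstacle in this argument: it is a direct assembly of the monotonicity of $ch^s_f$ under the family supremum, the containment $K_4 \in \mathcal{P}_k$, the evaluated constant $ch^s_f(K_4)=4$, and the external choosability theorem. The one point I would flag is the harmless strict-versus-nonstrict subtlety in the upper bound: although each term $4-\frac{1}{m}$ lies strictly below $4$, their supremum over $m$ equals $4$, so this route cannot sharpen $ch^s_f(G)\le 4$ to a strict inequality — which is precisely consistent with the $K_4$ lower bound pinning the value to exactly $4$.
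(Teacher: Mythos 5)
Your proposal is correct and takes essentially the same route as the paper, which likewise derives the lower bound from the membership $K_4 \in \mathcal{P}_k$ for $k \ge 5$ together with $ch^s_f(K_4)=4$, and the upper bound for $k \in \{5,6\}$ from the result of \cite{LSX2001} that such graphs are $(4m,m)$-choosable for every $m$. Your only addition is to make explicit, via Lemma \ref{lem-alternatedef}, the passage from $(4m,m)$-choosability for all $m$ to $ch^s_f(G) \le 4$, a step the paper leaves implicit.
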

	
	The construction in Proposition \ref{pro-planar} does not contain $k$-cycle for $k \geq 17$, which means that $ch^s_f(\mathcal{P}_{k}) > 4+ \frac{1}{3}$ for $k \ge 17$. It remains an open question as what is the smallest $k$  such that $ch^s_f(\mathcal{P}_{ k}) > 4$ ?
	
	The family of planar graphs   without $4$-, $5$-cycles has been studied extensively in the literature,   because of the well-known Steinberg's Conjecture, see \cite{Steinberg1993}. The conjecture asserts that   every planar graph contains neither $4$-cycle nor $5$-cycle is $3$-colourable. This conjecture was disproved \cite{CHKLS2017}. The list version of this conjecture was disproved earlier by Voigt \cite{Voigt2007}, who first constructed a non-$3$-choosable planar graph without $4$-, $5$-cycle with $344$ vertices. A  smaller one was given by Montassier \cite{Montassier2006} with $209$ vertices. 
	
	However, the counterexample graph to Steinberg's Conjecture given in \cite{CHKLS2017} is $(6,2)$-colourable, see Appendix \ref{Steinberg-example},  hence it has fractional chromatic number exactly $3$ (The graph contains a triangle, so the lower bound is $3$). 
	Therefore, it is $(3m,m)$-choosable for some $m$ by the main result in \cite{ATV1997}.  
	On the other hand, it is easily to check that all the non-3-choosable examples constructed in \cite{Voigt2007,Montassier2006,MRW2006,WWW2008} mentioned above are $3$-colourable, hence they are also $(3m,m)$-choosable for some $m$. So before the present paper,  it was unknown whether or not for every positive integer $m$, there is a   planar graph without $4$- and $5$-cycles which is not $(3m,m)$-choosable. In the following,  for each positive integer $m$, we construct a planar graph without cycles of length $4$ and $5$ which is not  $(3m + \lfloor\frac{m-1}{12}\rfloor,m)$-choosable. When $m=1$,  the graph has $164$ vertices, which is smaller than the counterexample graph found by Montassier in \cite{Montassier2006}.
	
	\begin{proposition}
		\label{pro-planar-C4&C5-free}
		For each positive integer $m$, there is a planar graph $G$ without $4$-cycle and $5$-cycle,  which is not $(3m+\lfloor\frac{m-1}{12}\rfloor,m)$-choosable. Consequently,  $ch^s_f(\mathcal{P}_{  4,5}) \geq 3+\frac{1}{12}$.
	\end{proposition}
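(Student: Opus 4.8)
The plan is to follow the gadget-plus-blow-up template already used in Propositions \ref{pro-planar} and \ref{pro-planar-c4-free}. First I would construct a single planar ``target'' gadget $T$ with two distinguished terminals $u$ and $v$, free of $4$- and $5$-cycles, together with a list assignment $L=L_{A,B}$ depending on two disjoint $m$-sets $A$ and $B$. Setting $\epsilon m=\lfloor\frac{m-1}{12}\rfloor$, I assign $L(u)=A$ and $L(v)=B$, so that in any $m$-fold $L$-colouring $\phi$ we are forced to have $\phi(u)=A$ and $\phi(v)=B$, and I introduce auxiliary pairwise disjoint colour classes, one of which, $E$, has size exactly $\epsilon m$, while the remaining classes are sized so that every list has size $(3+\epsilon)m$. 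Since $4$- and $5$-cycles are forbidden but triangles are allowed, the basic building block remains a triangle that forces its three incident colour classes to occupy disjoint colours; the work is in wiring these triangles together by paths of length at least $6$ so that the desired adjacencies are realized without ever creating a short cycle. This is exactly why the gadget must be large, giving $164$ vertices when $m=1$.

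Second, I would show that $T$ has no $m$-fold $L$-colouring. As in the earlier proofs, each triangle of the gadget forces the colours available to an internal vertex to shrink: a clique can use each colour of a shared auxiliary class at most once, and because each list is only $\epsilon m$ larger than the unavoidable demand, at most $\epsilon m$ colours of the relevant class survive at each step. Tracking these losses along the gadget and accumulating them at a single ``sink'' vertex $z$ should yield an inequality of the shape $m=|\phi(z)|\le 12\,\epsilon m<m$, a contradiction. The coefficient $12$ here is precisely what dictates the denominator in $\lfloor\frac{m-1}{12}\rfloor$; arranging the adjacencies so that twelve essentially independent $\epsilon m$-losses pile up at $z$ while respecting the girth constraints is the main obstacle, and is the reason the construction is so much more elaborate than the $C_4$-free one.

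Third comes the blow-up. Setting $p={(3+\epsilon)m \choose m}^2$, I take $p$ disjoint copies of $T$, identify all copies of $u$ into a single vertex $u$ and all copies of $v$ into a single vertex $v$, and use the $(3+\epsilon)m$-list assignment with $L(u)=X$ and $L(v)=Y$ for disjoint sets $X,Y$ of size $(3+\epsilon)m$; for each pair of $m$-subsets $(A,B)$ with $A\subseteq X$ and $B\subseteq Y$, I attach the copy $T_{A,B}$ carrying the list assignment above. Any $m$-fold $L$-colouring must colour $u$ with some $A\subseteq X$ and $v$ with some $B\subseteq Y$, but then $T_{A,B}$ admits no $m$-fold $L$-colouring, a contradiction. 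Hence $G$ is planar, has no $4$- or $5$-cycle, and is not $(3m+\lfloor\frac{m-1}{12}\rfloor,m)$-choosable. (Identifying only the degree-one-in-$T$ terminals $u,v$ creates no new short cycle, so the girth condition is preserved.)

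Finally, the consequence follows from Lemma \ref{lem-alternatedef}: the non-colourability gives $ch_m(G)\ge 3m+\lfloor\frac{m-1}{12}\rfloor+1$, so
$$ch^s_f(\mathcal{P}_{4,5})\ \ge\ \frac{ch_m(G)-1}{m}\ \ge\ 3+\frac{\lfloor\frac{m-1}{12}\rfloor}{m},$$
and letting $m\to\infty$ yields $ch^s_f(\mathcal{P}_{4,5})\ge 3+\frac{1}{12}$. The genuinely hard part is not this final estimate nor the blow-up, both of which are routine given the earlier results, but the first two steps: designing a planar $C_4,C_5$-free gadget whose clique structure forces precisely a twelve-fold accumulation of $\epsilon m$-losses at a single vertex.
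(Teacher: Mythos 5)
There is a genuine gap: your proposal reproduces the routine outer shell of the argument (the blow-up with $p=\binom{(3+\epsilon)m}{m}^2$ copies of $T$ identified at $u,v$, and the final limit $3+\lfloor\frac{m-1}{12}\rfloor/m \to 3+\frac{1}{12}$ via Lemma \ref{lem-alternatedef}), but it omits exactly what you yourself identify as ``the genuinely hard part'': the explicit $C_4$- and $C_5$-free planar gadget and the proof that it admits no $m$-fold $L$-colouring. Saying that triangles ``should'' force losses that ``pile up'' at a sink vertex is a plan, not a proof; without exhibiting the gadget and verifying the counting, the statement is not established. For comparison, the paper's gadget (Fig.\ \ref{T45-planar graph}) has $20$ vertices, and the contradiction is not of the single-vertex form $m=|\phi(z)|\le 12\epsilon m$ that you predict, but is reached at the triangle $ww_1w_2$: one first bounds $|\phi(z_1)\cap A|, |\phi(z_2)\cap B| \ge m-\epsilon m$ via the clique $u_1v_1z_1$ and its mirror, then introduces parameters $\alpha,\beta,\gamma\le\epsilon$ for $|\phi(z)\cap A|, |\phi(z)\cap B|, |\phi(z)\cap E|$, propagates through the cliques $zz_1x$, $x_1x_2x_3$ (and their mirrors) to bound $|\phi(w_1)\cap A|\le(3\epsilon+\beta)m$ and $|\phi(w_2)\cap A|\le(3\epsilon+\alpha)m$, and finally counts on $ww_1w_2$ to get $3m \le 2m+7\epsilon m+2(\alpha+\beta)m+\gamma m \le 2m+12\epsilon m < 3m$. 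The coefficient $12$ thus emerges from a weighted accumulation ($7\epsilon+2(\alpha+\beta)+\gamma$), not from ``twelve essentially independent $\epsilon m$-losses,'' so even the qualitative shape you guess for the contradiction is not the one that works.

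Two smaller points. First, your parenthetical justification for girth preservation is wrong on both counts: in a gadget of this kind $u$ and $v$ need not have degree one (in the paper's $T$ they have degrees $2$ and $4$), and degree-one terminals would not by themselves suffice --- what matters is that any new cycle created by the identifications must pass through both $u$ and $v$, hence has length at least $2\,d_T(u,v)$, so one needs $d_T(u,v)\ge 3$ in the gadget. Second, the figure of $164$ vertices is not the size of the gadget (which is why it must be ``large''); it is the size of the blown-up graph $G$ when $m=1$, namely $9$ copies of an $18$-vertex interior plus the two identified terminals.
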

	\begin{proof}
		Let $T$ be labeled as shown in Fig.\ref{T45-planar graph}, $\epsilon$ be a real number such that $\epsilon m = \lfloor\frac{m-1}{12}\rfloor$. 
		Assume $A,B,C,D,E$ are pairwise disjoint sets of colours with $|A|=|B|=|C|=m$, $|D|=2m$ and $|E|=\epsilon m$.
		\begin{itemize}
			\item $L(u)=A$, $L(v)=B$ and $L(w) = C \cup D \cup E$. 
			\item $L(u_1) = L(u_2) = L(w_1) = L(x_3) = L(y_1) = L(y_3) = L(z_1)= L(z_2) = A\cup D \cup E$.
			\item $L(v_1) = L(v_2) = L(w_2) = L(x_1) =L(x_2) = L(y_2) =L(y_3) =  B \cup D \cup E$.  
			\item $L(x) = L(y) = L(z) = A \cup B \cup  C \cup E$.
		\end{itemize}   
		
		\begin{figure}[h]
			\centering 
			\begin{tikzpicture}[>=latex,	
				roundnode/.style={circle, draw=black,fill= red, minimum size=1mm, inner sep=0pt}]  
				\node [roundnode] (u) at (0,3){}; 
				\node [roundnode] (z) at (0,2){}; 
				\node [roundnode] (z2) at (1.3,1.2){};
				\node [roundnode] (y) at (0.7,0.8){};  
				\node [roundnode] (y1) at (1,-0.5){}; 
				\node [roundnode] (y2) at (1,-1.5){};	
				\node [roundnode] (y3) at (0.6,-1){};
				\node [roundnode] (u2) at (2,1.8){};	
				\node [roundnode] (u1) at (-2,1.8){};
				\node [roundnode] (v2) at (2,-1.8){};	
				\node [roundnode] (v1) at (-2,-1.8){};
				\node [roundnode] (v) at (0,-3){};
				\node [roundnode] (z1) at (-1.3,1.2){}; 
				\node [roundnode] (x) at (-0.7,0.8){}; 
				\node [roundnode] (x1) at (-1,-0.5){}; 
				\node [roundnode] (x2) at (-1,-1.5){};	
				\node [roundnode] (x3) at (-0.6,-1){};
				\node [roundnode] (w) at (0,0.7){}; 
				\node [roundnode] (w1) at (-0.3,-0.4){}; 
				\node [roundnode] (w2) at (0.3,-0.4){}; 
				\node at (0,3.2){$u$};
				\node at (0,-3.2){$v$};
				\node at (0,2.2){$z$};
				\node at (-0.72,1.05){$x$};
				\node at (-1.25,-0.5){$x_1$};
				\node at (-1.25,-1.5){$x_2$};
				\node at (-0.4,-1.2){$x_3$};
				\node at (0.75,1.05){$y$};
				\node at (1.25,-0.5){$y_1$};
				\node at (1.25,-1.5){$y_2$};
				\node at (0.4,-1.2){$y_3$};
				\node at (0.2,0.7){$w$};
				\node at (-0.53,-0.25){$w_1$};
				\node at (0.53,-0.25){$w_2$};	
				
				\node at (-2.2, -2){$v_1$}; 
				\node at (2.2, -2){$v_2$}; 
				\node at (-2.2,2){$u_1$};
				\node at (2.2, 2){$u_2$}; 
				\node at (-1.6, 1.1){$z_1$}; 		
				\node at (1.65, 1.1){$z_2$}; 	
				\draw  (u)--(u1)--(z1)--(z)--(z2)--(u2)--(u);
				\draw (v1)--(v)--(v2)--(z2); 
				\draw (z)--(x)--(z1); 
				\draw (z)--(y)--(z2);
				\draw (x)--(x1)--(x2)--(x3)--(x1); 
				\draw (y)--(y1)--(y2)--(y3)--(y1); 
				\draw  (u1)--(v1)--(z1);
				\draw  (u2)--(v2);
				
				\draw (x2)--(v)--(y2);	
				\draw (x3)--(w1);
				\draw (y3)--(w2);
				\draw (z)--(w)--(w1)--(w2)--(w);
			\end{tikzpicture}
			\caption{Target graph for $\mathcal{P}_{4,5}$}
			\label{T45-planar graph}
		\end{figure}
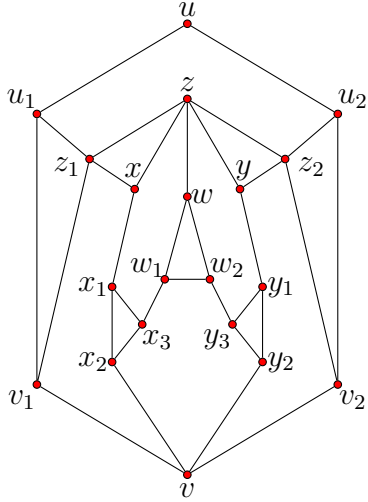 
		
		By the same argument as in the proof of Theorem \ref{T-planar graph}, it suffices to show that there is no $m$-fold $L$-colouring of $G$. Suppose to the contrary, $\phi$ is an $m$-fold $L$-colouring of $G$. Then $\phi(u)=A$ and $\phi(v)=B$. Note that $u_1v_1z_1$ is a clique, so each colour in $D \cup E$ can be used only once in these vertices, which means that  $$|\phi(z_1) \cap A| \geq 3m-|D \cup E|=m-\epsilon m.$$
		Similarly, $|\phi(z_2) \cap B| \geq m-\epsilon m$. So $|\phi(z) \cap A| \leq \epsilon m$ and $|\phi(z) \cap B| \leq \epsilon m$. We assume that $|\phi(z) \cap A| = \alpha m$, $|\phi(z) \cap B|=\beta m$ and $|\phi(z) \cap E|= \gamma m$, it is clear that $\alpha, \beta, \gamma \leq \epsilon$, and $|\phi(z) \cap C|= (1-(\alpha+\beta+\gamma))m$.
		
		As $zz_1x$ is a clique, each colour in $A$ can be used once in these three vertices, so 
		$$|\phi(x) \cap A| \leq |A|-|\phi(z_1) \cap A|-|\phi(z) \cap A| \leq (\epsilon-\alpha)m.$$
		Similarly, by considering the edge $xz$, we have that $|\phi(x) \cap C| \leq (\alpha+\beta +\gamma)m$ and $|\phi(x) \cap E| \leq (\epsilon-\gamma)m$. Note that  $\phi(z_1)\cap E$ might be empty. So we only have $|\phi(x) \cap E| \le |E| - |E \cap \phi(z)|$.   Therefore, we have 
		$$|\phi(x)\cap B| \geq m- |\phi(x) \cap A|-|\phi(x) \cap C|-|\phi(x) \cap E| \geq m-(2\epsilon +\beta)m.$$
		This implies that $|\phi(x_1) \cap B| \leq (2\epsilon+\beta)m$.
		
		Since $x_1x_2x_3$ is a clique, each colour in $D \cup E$ can be used at most once on these three vertices, but we need $3m$ colours for these vertices, so 
		$$|\phi(x_3) \cap A| \geq 3m-|\phi(x_1) \cap B|-|(\phi(x_1)\cup \phi(x_2)\cup \phi(x_3))\cap (D \cup E)|\geq m-(3\epsilon +\beta)m.$$
		Hence, $|\phi(w_1) \cap A| \leq (3\epsilon +\beta)m$.
		
		By symmetry, $|\phi(w_2) \cap A| \leq (3\epsilon +\alpha)m$. On the other hand, $|\phi(w)\cap C| \leq m-|\phi(z) \cap C| \leq (\alpha+\beta+\gamma)m$, so we have 
		\begin{align*}
			3m &= |\phi(w)|+|\phi(w_1)|+|\phi(w_2)| \\  
			& \leq |\phi(w_1) \cap A|+|\phi(w_2) \cap A| + |\phi(w)\cap C| + |(\phi(w)\cup \phi(w_1)\cup \phi(w_2))\cap (D \cup E)| \\
			& \leq  2m + 7\epsilon m + 2(\alpha+\beta)m + \gamma m \\
			& \leq 2m + 12\epsilon m  < 3m,
		\end{align*}
		a contradiction. 
	\end{proof}

	It was proved in  \cite{LZ2020} that the strong fractional choice number of $K_4$-minor-free graphs with girth at least $g$  is $2+\frac{1}{\lfloor(g+1)/4\rfloor}$.  
	Thus   the strong fractional choice number of the family of planar graphs of girth $5$ or $6$ is at least $3$, i.e., $ch^s_f(\mathcal{P}_{3,4}) \geq 3$. On the other hand,  extending the proofs in \cite{Thomassen-3-list,Thomassen-3-list-short}, Voigt \cite{Voigt1998} proved that every planar graphs with girth $5$ is $(3m,m)$-choosable, so the family of planar graphs of girth $5$ or $6$ has strong fractional choice number at most $3$.
	
	\begin{proposition}
		$ch^s_f(\mathcal{P}_{3,4}) = ch^s_f(\mathcal{P}_{3,4,5})=3$.
	\end{proposition}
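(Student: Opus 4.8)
The plan is to sandwich both quantities between $3$ and $3$ by combining the two external results recalled immediately before the statement. For the lower bounds, I would observe that every $K_4$-minor-free graph is planar and that the girth condition passes to subfamilies: the class of $K_4$-minor-free graphs of girth at least $5$ is contained in $\mathcal{P}_{3,4}$, and the class of $K_4$-minor-free graphs of girth at least $6$ is contained in $\mathcal{P}_{3,4,5}$. Plugging $g=5$ and $g=6$ into the formula $2+\frac{1}{\lfloor(g+1)/4\rfloor}$ from \cite{LZ2020} gives value $3$ in both cases, so each of these $K_4$-minor-free subfamilies already has strong fractional choice number $3$. Since $ch^s_f(\cdot)$ of a family is a supremum over its members, the supremum over the larger planar family dominates that over the subfamily, yielding $ch^s_f(\mathcal{P}_{3,4}) \ge 3$ and $ch^s_f(\mathcal{P}_{3,4,5}) \ge 3$.

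For the upper bound, I would invoke Voigt's theorem \cite{Voigt1998} that every planar graph of girth at least $5$ is $(3m,m)$-choosable for every positive integer $m$. Fixing any $G \in \mathcal{P}_{3,4}$, this gives $ch_m(G) \le 3m$ for all $m$, so using the alternative description of the strong fractional choice number in Lemma \ref{lem-alternatedef},
$$ch^s_f(G) = \sup_{k \in \mathbf{N}} \frac{ch_k(G)-1}{k} \le \sup_{k \in \mathbf{N}} \frac{3k-1}{k} = 3.$$
Taking the supremum over $G \in \mathcal{P}_{3,4}$ gives $ch^s_f(\mathcal{P}_{3,4}) \le 3$, hence $ch^s_f(\mathcal{P}_{3,4}) = 3$. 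Finally, since girth at least $6$ implies girth at least $5$, we have the inclusion $\mathcal{P}_{3,4,5} \subseteq \mathcal{P}_{3,4}$; monotonicity of the supremum over a subfamily then gives $ch^s_f(\mathcal{P}_{3,4,5}) \le ch^s_f(\mathcal{P}_{3,4}) = 3$, and combining with the lower bound yields $ch^s_f(\mathcal{P}_{3,4,5}) = 3$.

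The proof is essentially a bookkeeping exercise: all the genuine mathematical content lies in the two cited results, together with the alternative formula for $ch^s_f$ from Lemma \ref{lem-alternatedef}. Accordingly, I do not expect any real obstacle. The one place to be careful is matching conventions—confirming that $\mathcal{P}_{3,4}$ and $\mathcal{P}_{3,4,5}$ correspond exactly to \emph{girth at least $5$} and \emph{girth at least $6$}, and that Voigt's $(3m,m)$-choosability statement holds for \emph{all} $m$, so that the supremum in Lemma \ref{lem-alternatedef} is controlled for every $k$ and not merely for $k=1$. Once these conventions are pinned down, the sandwich closes immediately.
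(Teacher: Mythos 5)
Your proof is correct and follows essentially the same route as the paper: the lower bound comes from the cited result of \cite{LZ2020} on $K_4$-minor-free graphs of girth $5$ and $6$ (which are planar, hence lie in $\mathcal{P}_{3,4}$ and $\mathcal{P}_{3,4,5}$ respectively), and the upper bound comes from Voigt's theorem \cite{Voigt1998} that planar graphs of girth $5$ are $(3m,m)$-choosable for all $m$. The paper compresses this into two sentences of text before the proposition; your only additions are the explicit monotonicity bookkeeping and the use of Lemma \ref{lem-alternatedef} for the upper bound, both of which are sound.
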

	
	For the case of $\mathcal{P}_{3}$, the best known upper and lower bounds for their strong fractional chromatic number was obtained \cite{JZ2019}: $3 +\frac{1}{17} \leq ch^s_f(\mathcal{P}_3) \leq 4$. 
	
	\section{Open problems}
	\label{open problem}
	
	One basic unsolved problem concerning the strong fractional choice number is whether every rational $r \ge 2$ is the strong fractional choice number of a graph. We conjecture an affirmative answer.
	
	\begin{conjecture}
		For any rational number $r \geq 2$, there exists a graph $G$ such that $ch^s_f(G)=r$ and   a graph $G'$ with $\chi^s_{f,P}(G')=r$.
	\end{conjecture}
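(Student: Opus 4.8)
The plan is to reduce the whole conjecture to a single uniform construction. Since $\chi_f^s(G) \le ch_f^s(G) \le \chi_{f,P}^s(G)$ for every graph, it suffices to produce, for each rational $r \ge 2$, a graph $G_r$ with $\chi_f(G_r) = \chi_{f,P}^s(G_r) = r$: then $ch_f^s(G_r) = r$ is forced by the sandwich, and the \emph{same} graph answers both halves of the conjecture at once. This is exactly the collapse $\chi_f = ch_f^s = \chi_{f,P}^s$ already engineered in Theorem \ref{thm:n+m+m/q} and exploited in Proposition \ref{n+m}, so the real task is to extend those constructions from the rationals satisfying $\frac{2p}{2q+1} \le \lfloor p/q\rfloor$ to all of $[2,\infty)\cap\mathbb{Q}$.

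First I would enlarge the family of base graphs. The graph $G_{n,m,k}=C_{2k+1}[I:K_n,K_m]$ realizes only numbers of the special shape $n+m+m/k$ with $n \ge m$, and the arithmetic obstruction in Proposition \ref{n+m} is precisely that these values miss certain $p/q$. The natural remedy starts from the observation that $C_{2k+1}$ is itself the circular complete graph $K_{(2k+1)/k}$; its fractional chromatic number $2+\tfrac1k$ in Corollary \ref{odd-cycle} is just the vertex-transitive identity $\chi_f = |V|/\alpha$. I would therefore replace $C_{2k+1}$ by the general circular complete graph $K_{p/q}$ (a circulant, hence vertex-transitive, with $\chi_f(K_{p/q}) = p/\alpha = p/q$ for $p \ge 2q$), whose fractional chromatic numbers already exhaust every rational $\ge 2$, and analyze either $K_{p/q}$ directly or its blow-up $K_{p/q}[I:K_n,K_m]$. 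The lower bound $\chi_f(G_r) \ge r$ then follows verbatim from the $|V|/\alpha$ argument used in the proof of Theorem \ref{thm:n+m+m/q}, together with an explicit balanced optimal fractional colouring.

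The heart of the matter is the matching upper bound $\chi_{f,P}^s(G_r) \le r$, that is, a Painter strategy witnessing $(a,b)$-paintability whenever $a/b \ge r$. Here I would imitate the two-case strategy of Theorem \ref{thm:n+m+m/q}: when Lister's chosen set meets every colour class of a fixed optimal fractional colouring, Painter colours a canonical maximum independent set selected by a round-robin congruence rule so that the classes are used equally often (the analogue of Case 1); when Lister's set misses some class, Painter greedily colours a large independent set by traversing the classes (the analogue of Case 2). The counting argument then bounds, for a vertex that exhausts its tokens, the number of rounds in which it was skipped, and forces $a/b < r$. \emph{The main obstacle is this Painter strategy.} The odd-cycle proof leans heavily on the cyclic order $V_0,\dots,V_{2k}$ and on consecutive classes being adjacent: this is what makes the Case 2 traversal colour a near-maximum independent set and keeps the modular bookkeeping of Case 1 tight enough to land exactly at the threshold $r$. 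For a general circular complete graph the maximum independent sets overlap in a more intricate way, so reproducing both the traversal and the congruence balancing—so that the final inequality comes out precisely at $r$ rather than some larger value—is the delicate step, and I expect it to require a careful choice of which maximum independent sets Painter cycles through.

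Finally, for the sporadic rationals that might resist even the extended construction (such as the $8/3$ and $11/3$ flagged after Proposition \ref{n+m}), I would look for ad hoc gadgets, and, should only the choice-number half remain open, attack it separately by amplification in the spirit of Proposition \ref{pro-planar}: glue many copies of a small gadget that fails to be $(a,b)$-choosable for $a/b$ just below $r$, and pair this with a direct $(a,b)$-choosability argument for $a/b \ge r$, thereby pinning $ch_f^s$ to $r$ without needing the stronger paintability statement for that value.
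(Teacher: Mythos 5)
This statement is one of the paper's \emph{conjectures}: it is posed as an open problem in Section \ref{open problem}, and the paper itself proves it only for the rationals covered by Proposition \ref{n+m}, namely those $p/q$ with $p \ge 2q$ and $\frac{2p}{2q+1} \le \lfloor p/q \rfloor$. Your proposal does not close this gap. The sandwich reduction (produce $G_r$ with $\chi_f(G_r) = \chi^s_{f,P}(G_r) = r$) and the idea of generalizing the blow-up construction are exactly the strategy behind Theorem \ref{thm:n+m+m/q} and Proposition \ref{n+m}; what is new in your plan is the replacement of $C_{2k+1}$ by a general circular clique $K_{p/q}$, and for that step you explicitly defer the decisive ingredient --- a Painter strategy whose counting lands exactly at the threshold $r$ --- writing that it is ``the main obstacle'' and that you ``expect it to require a careful choice.'' That deferred step is not a technical detail; it is precisely the open content of the conjecture, so what you have is a research plan, not a proof. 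Note also that your reduction target (every rational $r\ge 2$ is the common value of $\chi_f$ and $\chi^s_{f,P}$ of some graph) is strictly stronger than the conjecture itself, so even granting the plan, you would be proving more than is asked with no evidence that the stronger statement is true.

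Moreover, the specific remedy you propose is provably inadequate for infinitely many rationals, by results in this very paper. Write $r = p/q$ in lowest terms, and realize $K_{p/q}$ on vertex set $\{0,\dots,p-1\}$ with $i \sim j$ iff $q \le |i-j| \le p-q$. Then with $t = \bigl\lfloor (p-2q+2)/2 \bigr\rfloor$, the sets $A = \{0,\dots,t-1\}$ and $B = \{q+t-1,\dots,q+2t-2\}$ induce a complete bipartite subgraph $K_{t,t}$ of $K_{p/q}$. By Lemma \ref{lem-alternatedef}, $ch^s_f(K_{p/q}) \ge ch(K_{p/q}) - 1 \ge ch(K_{t,t}) - 1$, and by the lower bound for complete bipartite graphs invoked in Section \ref{sec:uppper-bound}, $ch(K_{t,t}) \ge (1-o(1))\log_2 t$. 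Taking, say, $r = 3 + \frac{1}{q} = \frac{3q+1}{q}$ with $q$ large gives $t \ge (q+1)/2$, hence $ch^s_f\bigl(K_{(3q+1)/q}\bigr) \to \infty$ while $\chi_f\bigl(K_{(3q+1)/q}\bigr) = 3+\frac1q$. So for such $r$ the collapse $\chi_f = ch^s_f = \chi^s_{f,P}$ fails outright for the circular clique, and ``analyzing $K_{p/q}$ directly'' cannot work; the $|V|/\alpha$ lower bound you cite is fine, but the matching upper bound you hope for is false. The surviving variant, blow-ups $K_{p'/q'}[I\colon K_n,K_m]$ of a \emph{small} circular clique, is exactly the paper's construction when $p'/q' = (2k+1)/k$, and for it you establish neither which values are realized nor the required Painter strategy --- and the arithmetic restriction in Proposition \ref{n+m} is precisely what that analysis produces. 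The fallback of ``ad hoc gadgets'' for the remaining rationals (such as $8/3$ and $11/3$) is likewise only a hope, so the conjecture remains open under your proposal.
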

	
	Erd\H{o}s, Rubin and Taylor \cite{ERT1979} characterized all the $2$-choosable graphs. However, it seems to be a difficult problem to characterize all graphs $G$ with $ch_f^s(G)=2$. In a companion paper \cite{XZ2021+}, we proved that every $3$-choice critical bipartite graph $G$ (i.e., $G$ is not $2$-choosable, but every proper subgraph of $G$ is $2$-choosable) has strong fractioal choice number $2$. 
	
	\begin{question}
		Given a characterization of the class of graphs whose strong fractional choice number are $2$.
	\end{question}

	It was asked by Erd\H{o}s, Rubin and Taylor \cite{ERT1979} that whether every $(a,b)$-choosable graph is also $(am,bm)$-choosable for any positive integer $m$. The case $(a,b)=(2,1)$ was affirmed by Tuza and Voigt \cite{TV1996-2m}, but the case $a \geq 4$ and $b=1$ was negatived by Dvo\v{r}\'{a}k, Hu and Sereni \cite{DHS2019} recently. For a relax and possibly correct version, we ask the following question.
	
	\begin{question}
		\label{conj-ch}
		Is it true that $ch^s_{f}(G) \leq ch(G)$ for any graph $G$?
	\end{question}
	
	Similarly,  it was conjectured by Mahoney, Meng and Zhu \cite{MMZ2015} that every $(a,b)$-paintable graph is also $(am,bm)$-paintable for any positive integer $m$. We also ask the following weaker problem.
	
	\begin{question}
		\label{conj-chiP}
		Is it true that  $\chi_{f,P}^s(G) \le \chi_P(G)$  for any graph $G$?
	\end{question} 
	
	Planar graph colouring is a central problem with respect to many colouring concepts.  This is also the case for the strong fractional choice number of graphs. 
	
	\begin{question}
		What is the exact value of $ch^s_f(\mathcal{P})$?  Is it true that $ch^s_f(\mathcal{P}) < 5$?
	\end{question}

	\begin{question}
		What is the exact value of $ch^s_f(\mathcal{P}_3)$?  Is it true that $ch^s_f(\mathcal{P}_3) < 4$? 
	\end{question}
	
	\begin{question}
		What is the exact value of $ch^s_f(\mathcal{P}_{4,5})$?  Is it true that $ch^s_f(\mathcal{P}_{4,5}) < 4$? 
	\end{question}
	
	Although Steinberg's conjecture is false, the fractional chromatic number and the strong fractional chromatic number of graphs in $\mathcal{P}_{4,5}$ is open. It was proved in \cite{DH2019} that  for any $G \in \mathcal{P}_{4,5}$, $\chi_f(G) \leq 11/3$.  The following question remains open.

		\begin{question}
			\label{steinberg-type}
			Is it true that every graph  $G \in \mathcal{P}_{4,5}$ has $\chi_f(G) \le  3$, or even has $\chi_f^s(G) \le 3$?
	\end{question}

	\bibliographystyle{abbrv} 
	\bibliography{reference}

\begin{thebibliography}{10}

\bibitem{ATV1997}
N.~Alon, Z.~Tuza, and M.~Voigt.
\newblock Choosability and fractional chromatic numbers.
\newblock volume 165/166, pages 31--38. 1997.
\newblock Graphs and combinatorics (Marseille, 1995).

\bibitem{Beck1978}
J.~Beck.
\newblock On $3$-chromatic hypergraphs.
\newblock {\em Discrete Math.}, 24(2):127--137, 1978.

\bibitem{CHKLS2017}
V.~Cohen-Addad, M.~Hebdige, D.~Kr\'{a}l', Z.~Li, and E.~Salgado.
\newblock Steinberg's conjecture is false.
\newblock {\em J. Combin. Theory Ser. B}, 122:452--456, 2017.

\bibitem{DGK2016}
L.~Duraj, G.~Gutowski, and J.~Kozik.
\newblock Chip games and paintability.
\newblock {\em Electron. J. Combin.}, 23(3):Paper 3.3, 12, 2016.

\bibitem{DH2019}
Z.~Dvo\v{r}\'{a}k and X.~Hu.
\newblock Planar graphs without cycles of length 4 or 5 are
  {$(11:3)$}-colorable.
\newblock {\em European J. Combin.}, 82:102996, 18, 2019.

\bibitem{DHS2019}
Z.~Dvo\v{r}\'{a}k, X.~Hu, and J.-S. Sereni.
\newblock A 4-choosable graph that is not {$(8:2)$}-choosable.
\newblock {\em Adv. Comb.}, pages Paper No. 5, 9, 2019.

\bibitem{Erdos1963}
P.~Erd\H{o}s.
\newblock On a combinatorial problem.
\newblock {\em Nordisk Mat. Tidskr.}, 11:5--10, 40, 1963.

\bibitem{ERT1979}
P.~Erd\H{o}s, A.~L. Rubin, and H.~Taylor.
\newblock Choosability in graphs.
\newblock In {\em Proceedings of the {W}est {C}oast {C}onference on
  {C}ombinatorics, {G}raph {T}heory and {C}omputing ({H}umboldt {S}tate
  {U}niv., {A}rcata, {C}alif., 1979)}, Congress. Numer., XXVI, pages 125--157.
  Utilitas Math., Winnipeg, Man., 1980.

\bibitem{Galvin1995}
F.~Galvin.
\newblock The list chromatic index of a bipartite multigraph.
\newblock {\em J. Combin. Theory Ser. B}, 63(1):153--158, 1995.

\bibitem{Gutowski2011}
G.~Gutowski.
\newblock Mr. {P}aint and {M}rs. {C}orrect go fractional.
\newblock {\em Electron. J. Combin.}, 18(1):Paper 140, 8, 2011.

\bibitem{JZ2019}
Y.~Jiang and X.~Zhu.
\newblock Multiple list colouring triangle free planar graphs.
\newblock {\em J. Combin. Theory Ser. B}, 137:112--117, 2019.

\bibitem{LSX2001}
P.~C.~B. Lam, W.~C. Shiu, and B.~Xu.
\newblock On structure of some plane graphs with application to choosability.
\newblock {\em J. Combin. Theory Ser. B}, 82(2):285--296, 2001.

\bibitem{LZ2020}
X.~Li and X.~Zhu.
\newblock The strong fractional choice number of series-parallel graphs.
\newblock {\em Discrete Math.}, 343(5):111796, 5, 2020.

\bibitem{MMZ2015}
T.~Mahoney, J.~Meng, and X.~Zhu.
\newblock Characterization of {$(2m,m)$}-paintable graphs.
\newblock {\em Electron. J. Combin.}, 22(2):Paper 2.14, 14, 2015.

\bibitem{Montassier2006}
M.~Montassier.
\newblock A note on the not 3-choosability of some families of planar graphs.
\newblock {\em Inform. Process. Lett.}, 99(2):68--71, 2006.

\bibitem{MRW2006}
M.~Montassier, A.~Raspaud, and W.~Wang.
\newblock Bordeaux 3-color conjecture and 3-choosability.
\newblock {\em Discrete Math.}, 306(6):573--579, 2006.

\bibitem{RS2000}
J.~Radhakrishnan and A.~Srinivasan.
\newblock Improved bounds and algorithms for hypergraph {$2$}-coloring.
\newblock {\em Random Structures Algorithms}, 16(1):4--32, 2000.

\bibitem{Steinberg1993}
R.~Steinberg.
\newblock The state of the three color problem.
\newblock In {\em Quo vadis, graph theory?}, volume~55 of {\em Ann. Discrete
  Math.}, pages 211--248. North-Holland, Amsterdam, 1993.

\bibitem{Thomassen-3-list}
C.~Thomassen.
\newblock {$3$}-list-coloring planar graphs of girth {$5$}.
\newblock {\em J. Combin. Theory Ser. B}, 64(1):101--107, 1995.

\bibitem{Thomassen-3-list-short}
C.~Thomassen.
\newblock A short list color proof of {G}r\"{o}tzsch's theorem.
\newblock {\em J. Combin. Theory Ser. B}, 88(1):189--192, 2003.

\bibitem{TuzaSurvey1997}
Z.~Tuza.
\newblock Graph colorings with local constraints---a survey.
\newblock {\em Discuss. Math. Graph Theory}, 17(2):161--228, 1997.

\bibitem{TV1996-2m}
Z.~Tuza and M.~Voigt.
\newblock Every {$2$}-choosable graph is {$(2m,m)$}-choosable.
\newblock {\em J. Graph Theory}, 22(3):245--252, 1996.

\bibitem{Vizing1976}
V.~G. Vizing.
\newblock Coloring the vertices of a graph in prescribed colors.
\newblock {\em Diskret. Analiz}, (29, Metody Diskret. Anal. v Teorii Kodov i
  Shem):3--10, 101, 1976.

\bibitem{Voigt1998}
M.~Voigt.
\newblock On list colourings and choosability of graphs.
\newblock {\em Habilitationsschrift, Tu Ilmenau}.

\bibitem{Voigt2007}
M.~Voigt.
\newblock A non-3-choosable planar graph without cycles of length 4 and 5.
\newblock {\em Discrete Math.}, 307(7-8):1013--1015, 2007.

\bibitem{WWW2008}
D.-Q. Wang, Y.-P. Wen, and K.-L. Wang.
\newblock A smaller planar graph without 4-, 5-cycles and intersecting
  triangles that is not 3-choosable.
\newblock {\em Inform. Process. Lett.}, 108(3):87--89, 2008.

\bibitem{XZ2021+}
R.~Xu and X.~Zhu.
\newblock The strong fractional choice number of 3-choice critical graphs.
\newblock {\em Submitted}.

\bibitem{Zhu2017}
X.~Zhu.
\newblock Multiple list colouring of planar graphs.
\newblock {\em J. Combin. Theory Ser. B}, 122:794--799, 2017.

\end{thebibliography}

	\begin{appendices}
		\section{}
		\label{Steinberg-example}
		In this part, we give a $(6,2)$-colouring $\phi$ of the counterexample to Steinberg's conjecture presented in \cite{CHKLS2017}. The counterexample constructed in \cite{CHKLS2017} is the graph depicted in Figure 5, where Figure 6 depicts two copies of $G_2$.  We first pre-colour part of the graph in Fig.3. as follows: $\phi(a)=\{1,3\}$, $\phi(b)=\{5,6\}$, $\phi(c)=\{1,2\}$, $\phi(d)=\{2,3\}$, $\phi(e)=\{4,5\}$, $\phi(f)=\{1,6\}$, $\phi(c')=\{3,4\}$, $\phi(d')=\{1,4\}$, $\phi(e')=\{2,5\}$ and $\phi(f')=\{3,6\}$.
		
		\vspace{-1.6cm}
		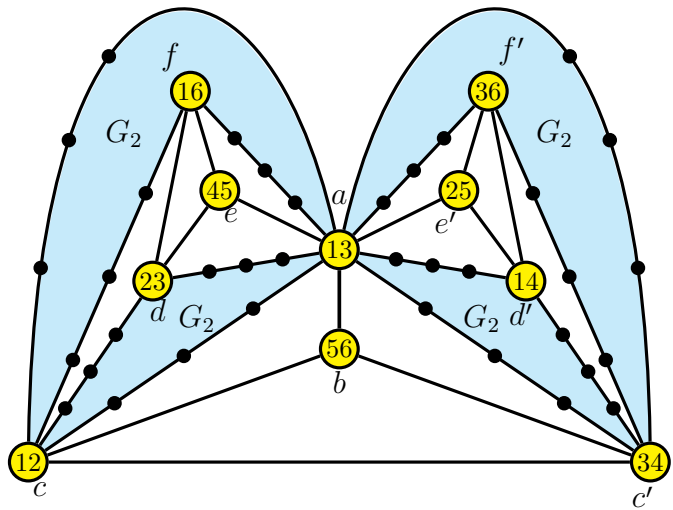
\begin{figure}[H] 
			\centering
			\begin{tikzpicture}[scale=0.88,>=latex,	
				roundnode/.style={circle, draw=black,fill=black, minimum size=1.5mm, inner sep=0pt},
				blanknode/.style={circle, draw = black, fill=yellow, very thick, minimum size=3mm, inner sep=0.3mm},]
				 
				\draw[draw=none,fill=cyan!20] (200:5)--(160:3)--(90:1.5)--(200:5);
				\draw[draw=none,fill=cyan!20] (340:5)--(20:3)--(90:1.5)--(340:5);
					 
				\draw[draw=none,fill=cyan!20] (200:5)--(120:4.5)--(90:1.5).. controls (100:7) and (125:8) .. (200:5);

				\draw[draw=none,fill=cyan!20] (340:5)--(60:4.5)--(90:1.5).. controls (80:7) and (55:8) .. (340:5);	
					 
				\node [blanknode] (c') at (340:5){\footnotesize $34$};
				\node [blanknode] (b) at (90:0){\footnotesize $56$};	
				\node [blanknode] (a) at (90:1.5){\footnotesize $13$};	
				\node [blanknode] (c) at (200:5){\footnotesize $12$}; 
				
				\node [blanknode] (d) at (160:3){\footnotesize $23$};	
				\node [blanknode] (d') at (20:3){\footnotesize $14$}; 
				
				\node [blanknode] (e) at (127:3){\footnotesize $45$};	
				\node [blanknode] (e') at (53:3){\footnotesize $25$}; 
				
				\node [blanknode] (f) at (120:4.5){\footnotesize $16$};	
				\node [blanknode] (f') at (60:4.5){\footnotesize $36$}; 
				
				\draw[line width = 1.2pt] (a)--(c)
				\foreach \t in {1/4,2/4,3/4}{pic [draw,pos=\t] {code={\node [roundnode]{};}}};
				
				\draw[line width = 1.2pt] (a)--(d)
				\foreach \t in {1/4,2/4,3/4}{pic [draw,pos=\t] {code={\node [roundnode]{};}}};
				 
				\draw[line width = 1.2pt] (c)--(d)
				\foreach \t in {1/4,2/4,3/4}{pic [draw,pos=\t] {code={\node [roundnode]{};}}};
				
				\draw[line width = 1.2pt] (a)--(c')
				\foreach \t in {1/4,2/4,3/4}{pic [draw,pos=\t] {code={\node [roundnode]{};}}};
				
				\draw[line width = 1.2pt] (a)--(d')
				\foreach \t in {1/4,2/4,3/4}{pic [draw,pos=\t] {code={\node [roundnode]{};}}};
				
				\draw[line width = 1.2pt] (c')--(d')
				\foreach \t in {1/4,2/4,3/4}{pic [draw,pos=\t] {code={\node [roundnode]{};}}};

				\draw[line width = 1.2pt] (a)--(f)
				\foreach \t in {1/4,2/4,3/4}{pic [draw,pos=\t] {code={\node [roundnode]{};}}};
				 
	 			\draw[line width = 1.2pt] (a)--(f')
	 			\foreach \t in {1/4,2/4,3/4}{pic [draw,pos=\t] {code={\node [roundnode]{};}}};
	 			
	 			\draw[line width = 1.2pt] (c)--(f)
	 			\foreach \t in {1/4,2/4,3/4}{pic [draw,pos=\t] {code={\node [roundnode]{};}}};
	 			
	 			\draw[line width = 1.2pt] (c')--(f')
		 		\foreach \t in {1/4,2/4,3/4}{pic [draw,pos=\t] {code={\node [roundnode]{};}}};

				\draw[line width = 1.2pt] (c)..controls (125:8) and (100:6.9) .. (a)
				\foreach \t in {1/8,2/8,3/8}{pic [draw,pos=\t] {code={\node [roundnode]{};}}};
				
				\draw[line width = 1.2pt] (c')..controls (55:8) and (80:6.9) .. (a)
				\foreach \t in {1/8,2/8,3/8}{pic [draw,pos=\t] {code={\node [roundnode]{};}}};
				
				\node at (334:5.1){$c'$};
				\node at (270:0.5){$b$};
				\node at (205:5){$c$}; 
				\node at (90:2.3){$a$};
				
				\node at (168:2.8){$d$};  
				\node at (168:2.2){$G_2$}; 
				
				\node at (11:2.8){$d'$};  
				\node at (12:2.2){$G_2$}; 
				
				\node at (129:2.6){$e$};  
				\node at (50:2.5){$e'$}; 
				
					\node at (120:5.1){$f$};  
				\node at (60:5.2){$f'$}; 
			 
			    \node at (45:4.6){$G_2$}; 
			    \node at (135:4.6){$G_2$};
			    	
			 	\draw[line width = 1.4pt] (a)--(b);
				\draw[line width = 1.2pt] (c)--(b)--(c')--(c);
				
				\draw[line width = 1.2pt] (a)--(e);
				\draw[line width = 1.2pt] (d)--(e)--(f)--(d);
				
				\draw[line width = 1.2pt] (a)--(e');
				\draw[line width = 1.2pt] (d')--(e')--(f')--(d');
 
			\end{tikzpicture}  
			\caption{The counterexample to Steinberg's Conjecture in \cite{CHKLS2017}}
		\end{figure}

We shall show that this partial colouring can be extended to a $2$-fold $6$-colouring of the whole graph. 
By symmetry, it suffices to extend the partial colouring to the left two copies of $G_2$, which is given in Figure \ref{G2coloring}.	
		 
		\vspace{-0.4cm}
		\begin{figure}[H]
			\centering 
			\begin{minipage}[t]{0.49\textwidth}
				\begin{tikzpicture}[scale=0.88,>=latex,	
					roundnode/.style={circle, draw = black, very thick, minimum size=3mm, inner sep=0.5pt},
					colorednode/.style={circle, draw = black, fill=yellow, very thick, minimum size=3mm, inner sep=1pt}] 
					
					\node [colorednode] (X) at (90:4){\scriptsize$23$};
					\node [colorednode] (Y) at (210:4){\scriptsize$12$};
					\node [colorednode] (Z) at (330:4){\scriptsize$13$};
					
					\node [roundnode] (A1) at (30:0.8){\scriptsize$24$};
					\node [roundnode] (A2) at (150:0.8){\scriptsize$13$};
					\node [roundnode] (A3) at (270:0.8){\scriptsize$56$};
					
					\node [roundnode] (B1) at (70:1.5){\scriptsize$13$};
					\node [roundnode] (B2) at (110:1.5){\scriptsize$46$};
					\node [roundnode] (B3) at (190:1.5){\scriptsize$25$};
					\node [roundnode] (B4) at (230:1.5){\scriptsize$24$};
					\node [roundnode] (B5) at (310:1.5){\scriptsize$13$};
					\node [roundnode] (B6) at (350:1.5){\scriptsize$56$};
					
					\node [roundnode] (C1) at (83:2.5){\scriptsize$45$};
					\node [roundnode] (C2) at (97:2.5){\scriptsize$15$};
					\node [roundnode] (C3) at (203:2.5){\scriptsize$34$};
					\node [roundnode] (C4) at (217:2.5){\scriptsize$35$};
					\node [roundnode] (C5) at (323:2.5){\scriptsize$56$};
					\node [roundnode] (C6) at (337:2.5){\scriptsize$24$};				  
					
					\node [roundnode] (D1) at (62:2.1){\scriptsize$26$};
					\node [roundnode] (D2) at (118:2.1){\scriptsize$23$};
					\node [roundnode] (D3) at (182:2.1){\scriptsize$16$};
					\node [roundnode] (D4) at (238:2.1){\scriptsize $16$};
					\node [roundnode] (D5) at (302:2.1){\scriptsize $24$};
					\node [roundnode] (D6) at (358:2.1){\scriptsize $13$};
					
					\node [roundnode] (E1) at (43:1.6){\scriptsize $35$};
					\node [roundnode] (E2) at (137:1.6){\scriptsize $56$};
					\node [roundnode] (E3) at (163:1.6){\scriptsize $23$};
					\node [roundnode] (E4) at (257:1.6){\scriptsize $23$};
					\node [roundnode] (E5) at (283:1.6){\scriptsize $16$};
					\node [roundnode] (E6) at (17:1.6){\scriptsize $26$};
					
					\node [roundnode] (F1) at (30:2.2){\scriptsize $14$};
					\node [roundnode] (F2) at (150:2.2){\scriptsize $14$};
					\node [roundnode] (F3) at (270:2.2){\scriptsize $45$};
					
					\node [roundnode] (G1) at (30:2.8){\scriptsize $23$};
					\node [roundnode] (G2) at (150:2.8){\scriptsize$23$};
					\node [roundnode] (G3) at (270:2.8){\scriptsize$13$};
					
					\node [roundnode] (H1) at (62:3){\scriptsize$16$};
					\node [roundnode] (H2) at (118:3){\scriptsize$46$};
					\node [roundnode] (H3) at (182:3){\scriptsize$56$};
					\node [roundnode] (H4) at (238:3){\scriptsize $46$};
					\node [roundnode] (H5) at (302:3){\scriptsize $24$};
					\node [roundnode] (H6) at (358:3){\scriptsize $56$};
					
					\draw[line width = 1.2pt] (A1)--(A2)--(A3)--(A1);
					\draw[line width = 1.2pt] (B1)--(B6)--(A1); 
					\draw[line width = 1.2pt] (B2)--(B3);  
					\draw[line width = 1.2pt] (B4)--(B5); 
					
					\draw[line width = 1.2pt] (A1)--(B1)--(C1)--(X);
					\draw[line width = 1.2pt] (A2)--(B2)--(C2)--(X);
					
					\draw[line width = 1.2pt] (A2)--(B3)--(C3)--(Y);
					\draw[line width = 1.2pt] (A3)--(B4)--(C4)--(Y);
					
					\draw[line width = 1.2pt] (A3)--(B5)--(C5)--(Z);
					\draw[line width = 1.2pt] (A1)--(B6)--(C6)--(Z);
					
					\draw[line width = 1.2pt] (C1)--(D1)--(B1); 
					\draw[line width = 1.2pt] (C2)--(D2)--(B2);
					\draw[line width = 1.2pt] (C3)--(D3)--(B3);
					\draw[line width = 1.2pt] (C4)--(D4)--(B4);
					\draw[line width = 1.2pt] (C5)--(D5)--(B5);
					\draw[line width = 1.2pt] (C6)--(D6)--(B6);

					\draw[line width = 1.2pt] (D1)--(E1)--(E6)--(D6);			
					\draw[line width = 1.2pt] (D2)--(E2)--(E3)--(D3);
					\draw[line width = 1.2pt] (D4)--(E4)--(E5)--(D5);
					
					\draw[line width = 1.2pt] (E1)--(F1)--(E6); 
					\draw[line width = 1.2pt] (E2)--(F2)--(E3);
					\draw[line width = 1.2pt] (E4)--(F3)--(E5);
					
					\draw[line width = 1.2pt] (G1)--(F1); 
					\draw[line width = 1.2pt] (G2)--(F2);
					\draw[line width = 1.2pt] (G3)--(F3);
					
					\draw[line width = 1.2pt] (C1)--(H1)--(X); 
					\draw[line width = 1.2pt] (C2)--(H2)--(X);  
					\draw[line width = 1.2pt] (C3)--(H3)--(Y); 
					\draw[line width = 1.2pt] (C4)--(H4)--(Y);  
					\draw[line width = 1.2pt] (C5)--(H5)--(Z); 
					\draw[line width = 1.2pt] (C6)--(H6)--(Z); 
					
					\draw[line width = 1.2pt] (H1)--(G1)--(H6); 
					\draw[line width = 1.2pt] (H2)--(G2)--(H3);
					\draw[line width = 1.2pt] (H4)--(G3)--(H5);	
					
					\node at  (90:4.5){$d$};
					\node at  (214:4.3){$c$};
					\node at  (326:4.3){$a$};
				\end{tikzpicture} 
			\end{minipage}
			\begin{minipage}[t]{0.49\textwidth}
				\begin{tikzpicture}[scale=0.9,>=latex,	
					roundnode/.style={circle, draw = black, very thick, minimum size=3mm, inner sep=1pt},
					colorednode/.style={circle, draw = black, fill=yellow, very thick, minimum size=3mm, inner sep=1pt}] 
					
					\node [colorednode] (X) at (90:4){\scriptsize$16$};
					\node [colorednode] (Y) at (210:4){\scriptsize$12$};
					\node [colorednode] (Z) at (330:4){\scriptsize$13$};
					
					\node [roundnode] (A1) at (30:0.8){\scriptsize$24$};
					\node [roundnode] (A2) at (150:0.8){\scriptsize$13$};
					\node [roundnode] (A3) at (270:0.8){\scriptsize$56$};
					
					\node [roundnode] (B1) at (70:1.5){\scriptsize$13$};
					\node [roundnode] (B2) at (110:1.5){\scriptsize$46$};
					\node [roundnode] (B3) at (190:1.5){\scriptsize$25$};
					\node [roundnode] (B4) at (230:1.5){\scriptsize$24$};
					\node [roundnode] (B5) at (310:1.5){\scriptsize$13$};
					\node [roundnode] (B6) at (350:1.5){\scriptsize$56$};
					
					\node [roundnode] (C1) at (83:2.5){\scriptsize$45$};
					\node [roundnode] (C2) at (97:2.5){\scriptsize$35$};
					\node [roundnode] (C3) at (203:2.5){\scriptsize$36$};
					\node [roundnode] (C4) at (217:2.5){\scriptsize$35$};
					\node [roundnode] (C5) at (323:2.5){\scriptsize$56$};
					\node [roundnode] (C6) at (337:2.5){\scriptsize$24$};				  
					
					\node [roundnode] (D1) at (62:2.1){\scriptsize$26$};
					\node [roundnode] (D2) at (118:2.1){\scriptsize$12$};
					\node [roundnode] (D3) at (182:2.1){\scriptsize$14$};
					\node [roundnode] (D4) at (238:2.1){\scriptsize $16$};
					\node [roundnode] (D5) at (302:2.1){\scriptsize $24$};
					\node [roundnode] (D6) at (358:2.1){\scriptsize $13$};
					
					\node [roundnode] (E1) at (43:1.6){\scriptsize $14$};
					\node [roundnode] (E2) at (137:1.6){\scriptsize $16$};
					\node [roundnode] (E3) at (163:1.6){\scriptsize $23$};
					\node [roundnode] (E4) at (257:1.6){\scriptsize $23$};
					\node [roundnode] (E5) at (283:1.6){\scriptsize $16$};
					\node [roundnode] (E6) at (17:1.6){\scriptsize $26$};
					
					\node [roundnode] (F1) at (30:2.2){\scriptsize $35$};
					\node [roundnode] (F2) at (150:2.2){\scriptsize $45$};
					\node [roundnode] (F3) at (270:2.2){\scriptsize $45$};
					
					\node [roundnode] (G1) at (30:2.8){\scriptsize $14$};
					\node [roundnode] (G2) at (150:2.8){\scriptsize$13$};
					\node [roundnode] (G3) at (270:2.8){\scriptsize$13$};
					
					\node [roundnode] (H1) at (62:3){\scriptsize$23$};
					\node [roundnode] (H2) at (118:3){\scriptsize$24$};
					\node [roundnode] (H3) at (182:3){\scriptsize$45$};
					\node [roundnode] (H4) at (238:3){\scriptsize $46$};
					\node [roundnode] (H5) at (302:3){\scriptsize $24$};
					\node [roundnode] (H6) at (358:3){\scriptsize $56$};
					
					\draw[line width = 1.2pt] (A1)--(A2)--(A3)--(A1);
					\draw[line width = 1.2pt] (B1)--(B6)--(A1); 
					\draw[line width = 1.2pt] (B2)--(B3);  
					\draw[line width = 1.2pt] (B4)--(B5); 
					
					\draw[line width = 1.2pt] (A1)--(B1)--(C1)--(X);
					\draw[line width = 1.2pt] (A2)--(B2)--(C2)--(X);
					
					\draw[line width = 1.2pt] (A2)--(B3)--(C3)--(Y);
					\draw[line width = 1.2pt] (A3)--(B4)--(C4)--(Y);
					
					\draw[line width = 1.2pt] (A3)--(B5)--(C5)--(Z);
					\draw[line width = 1.2pt] (A1)--(B6)--(C6)--(Z);
					
					\draw[line width = 1.2pt] (C1)--(D1)--(B1); 
					\draw[line width = 1.2pt] (C2)--(D2)--(B2);
					\draw[line width = 1.2pt] (C3)--(D3)--(B3);
					\draw[line width = 1.2pt] (C4)--(D4)--(B4);
					\draw[line width = 1.2pt] (C5)--(D5)--(B5);
					\draw[line width = 1.2pt] (C6)--(D6)--(B6);

					\draw[line width = 1.2pt] (D1)--(E1)--(E6)--(D6);			
					\draw[line width = 1.2pt] (D2)--(E2)--(E3)--(D3);
					\draw[line width = 1.2pt] (D4)--(E4)--(E5)--(D5);
					
					\draw[line width = 1.2pt] (E1)--(F1)--(E6); 
					\draw[line width = 1.2pt] (E2)--(F2)--(E3);
					\draw[line width = 1.2pt] (E4)--(F3)--(E5);
					
					\draw[line width = 1.2pt] (G1)--(F1); 
					\draw[line width = 1.2pt] (G2)--(F2);
					\draw[line width = 1.2pt] (G3)--(F3);
					
					\draw[line width = 1.2pt] (C1)--(H1)--(X); 
					\draw[line width = 1.2pt] (C2)--(H2)--(X);  
					\draw[line width = 1.2pt] (C3)--(H3)--(Y); 
					\draw[line width = 1.2pt] (C4)--(H4)--(Y);  
					\draw[line width = 1.2pt] (C5)--(H5)--(Z); 
					\draw[line width = 1.2pt] (C6)--(H6)--(Z); 
					
					\draw[line width = 1.2pt] (H1)--(G1)--(H6); 
					\draw[line width = 1.2pt] (H2)--(G2)--(H3);
					\draw[line width = 1.2pt] (H4)--(G3)--(H5); 
					
					\node at  (90:4.5){$f$};
					\node at  (215:4.3){$c$};
					\node at  (325:4.3){$a$};
				\end{tikzpicture}
			\end{minipage}
			\caption{$(6,2)$-colourings of the left two copies of $G_2$}
			\label{G2coloring}
		\end{figure}
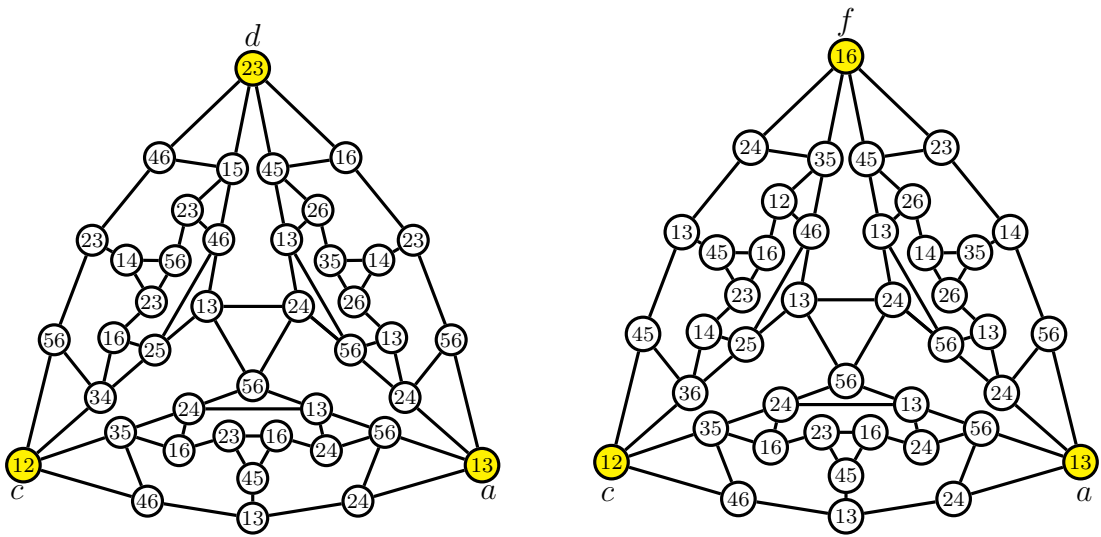

	\end{appendices}

\end{document}